\definecolor{green}{RGB}{255,0,0}
\newcommand{\M}{\mathcal{M}}
\newcommand{\E}{\mathbb{E}}
\newcommand{\eps}{\varepsilon}
\theoremstyle{plain}
\newtheorem{theorem}{Theorem}[section]
\newtheorem{lemma}[theorem]{Lemma}
\newtheorem{remark}{Remark}
\newtheorem{proposition}{Proposition}
\newtheorem{corollary}{Corollary}
\theoremstyle{remark}
\newtheorem{definition}[theorem]{Definition}
\begin{document}

  \title{\bf On standardness: estimation of the standardness constant and decidability aspects}
  %On standardness and the non-estimability of certain functionals of a set}
  
%\date{}

\author{
    Alejandro Cholaquidis \\ 
    \vspace{-0.7em}
    Facultad de Ciencias, Universidad de la Rep\'{u}blica (Uruguay)  
    \and
    Leonardo Moreno \\ 
    \vspace{-0.7em}
    Departamento de M\'{e}todos Cuantitativos, Facultad de Ciencias Econ\'{o}micas y de Administraci\'{o}n, Universidad de la Rep\'{u}blica (Uruguay) \\ 
    \and
    Beatriz Pateiro-L\'{o}pez \\ 
    \vspace{-0.7em}
    CITMAga (Galician Center for Mathematical Research and Technology) \\ 
    Departamento de Estat\'{i}stica, An\'{a}lise Matem\'{a}tica e Optimizaci\'{o}n, \\ 
    Universidade de Santiago de Compostela (Spain)
}

\maketitle

\begin{abstract}
Standardness is a popular assumption in the literature on set estimation. It also appears in statistical approaches to topological data analysis, where it is common to assume that the data were sampled from a probability measure that satisfies the standard assumption. Relevant results in this field, such as rates of convergence and confidence sets, depend on the standardness parameter, which in practice may be unknown. In this paper, we review the notion of standardness and its connection to other geometrical restrictions. We prove the almost sure consistency of a plug-in type estimator for the so-called standardness constant, already studied in the literature. We propose a method to correct the bias of the plug-in estimator and corroborate our theoretical findings through a small simulation study.  We also show that it is not possible to determine, based on a finite sample, whether a probability measure satisfies the standard assumption.
\end{abstract}

\noindent%
{\it Keywords:} Set Estimation, Standardness, Estimability, Topological Data Analysis

\noindent
{\it Mathematics subject classifications:} Primary 62G05, 62G20; secondary 60D05
\vfill

\section{Introduction}

Set estimation deals with the problem of estimating a set, or a functional of the set, from a sample of random
points. The set in question could represent a wide range of objects, such as the support of a probability distribution, its boundary or levels sets, among others. In this context, shape restrictions refer to limitations or constraints placed on the shapes of the sets being estimated. These restrictions can take many forms, such as requiring that the sets be convex, star-shaped, or have some other geometric property. The purpose of these restrictions is typically to make the estimation problem easier or more tractable, by reducing the space of possible sets that need to be considered. Standardness is one such shape restriction that is commonly used in set estimation. 
 
 In its simplest version, a set $S\subset \mathbb{R}^d$ is standard with respect to the Lebesgue measure $\mu$ if there exist $\delta>0$ and $\lambda>0$, such that, for all $x\in S$ and $0< \epsilon<\lambda$, the closed ball $\mathrm{B}(x,\epsilon)$, with center $x$ and radius $\epsilon$, satisfies
\begin{equation}\label{st1}
	\mu(\mathrm{B}(x,\epsilon)\cap S)\geq \delta \mu(\mathrm{B}(x,\epsilon)).
\end{equation}

Thus, $S$ is standard with respect to $\mu$ if a constant fraction of each small ball centred at $S$ is contained in $S$. This prevents the set for having sharp outward peaks, as shown in Figure \ref{fig:stand}. We will use the term {\emph{standardness constant}} to  refer to the supremum of the values of $\delta$ that satisfy the standardness condition on $S$ for some $\lambda > 0$.  
Standardness can also be defined more generally by substituting the Lebesgue measure on the left-hand side of (\ref{st1}) with another appropriate measure $\nu$.  We will employ this broader notion of standardness (formally defined in Section \ref{sec:back}) throughout this paper. Specifically, we focus on the case where $\nu$ is a probability measure supported on $S$. Hence, standardness can be viewed as both a geometric and probabilistic condition. We investigate the problem of estimating the standardness constant based on a random sample of independent observations drawn from $\nu$. Initially, we examine a plug-in type estimator and demonstrate its almost sure consistency under very general restrictions on $S$ and $\nu$.  
This estimator has previously been studied in \cite{fasy2014confidence} within the context of Topological Data Analysis (TDA), where its convergence in probability was established under stronger assumptions. We conduct a small simulation study that provides compelling evidence that the performance of this estimator is poor, even for large sample sizes. Then, we propose a method to correct the bias of the plug-in estimator and establish the almost sure consistency of this modified estimator. Our simulation study results demonstrate that the proposed correction significantly improves the performance of the estimator.

\begin{figure}[!h]
    \centering
\begin{tikzpicture}[x=1cm,y=1cm]
    \draw[line width=1pt, fill=green!10] (0,0) -- (2,0) -- (2,2) -- (1,1) --(0,2) --(0,0)
      -- cycle;
      \node at (1,2.5) {(a)};
\end{tikzpicture}
\hspace{1cm}
\begin{tikzpicture}
  \draw[line width=1pt,fill=green!10] (0,0) arc (270:180:1.3)  -- (-1.3,1.3)  arc (180:0:0.65)  -- (0,1.3)  arc (180:0:0.65)  -- (1.3,1.3) arc (0:-90:1.3) -- cycle;
  \node at (0,2.5) {(b)};
 \end{tikzpicture}
\hspace{1cm}
\begin{tikzpicture}[x=1cm,y=1cm]
   \def\mypath{(0,0) -- +(0,2) arc (180:360:1cm) -- +(0,-2) -- (0,0)}
   \fill[fill=green!10]                                \mypath;
   \draw[line width=1pt]\mypath;
    \node at (1,2.5) {(c)};
\end{tikzpicture}

    \caption{The sets shown in (a) and (b) are standard with respect to $\mu$. The set in (c) is not standard with respect to $\mu$.}
    \label{fig:stand}
\end{figure}
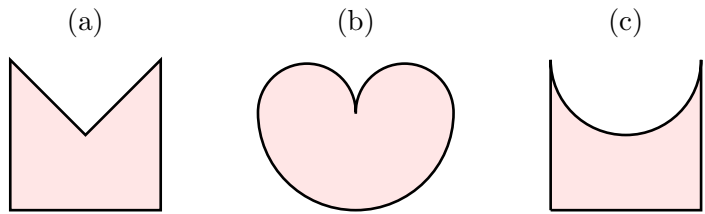

Even though the standardness constant can be consistently estimated, we prove that it is not possible to determine from a sample whether a set is standard or not.  
This finding falls within the realm of a well-known problem in statistics, that addresses the impossibility of consistently estimating certain functionals of a probability measure from a finite sample. A similar problem was recently analysed in \cite{ch2021reach}, where it is shown that it is not possible to determine, based on a finite sample, whether the reach of the support of a density
is zero or not. Determining which functionals can be consistently estimated is a long-standing problem in statistics, with roots dating back to the pioneering works of \cite{bah56} and  \cite{lecam60}. Subsequent studies have attempted to address this problem, including the work of \cite{don88}, as well as more recent contributions such as \cite{fm99} and \cite{romano04}, among others.   In this paper we contribute to this ongoing discussion by approaching the non-estimability problem from a set-estimation perspective.

The paper is organized as follows. In Section \ref{sec:back} we take a closer look at other shape restrictions considered in the literature on set estimation, along with their interdependent associations with standardness. In addition, we provide a comprehensive review of how the concept of standardness has been employed in the set estimation literature, as well as in other domains such as TDA.  In Section \ref{sec-back}, we provide the formal definition of the standardness constant and introduce an additional restriction and some results that will be needed throughout the work. In Section \ref{sec:est_stand} we present the estimators and theoretical results. Some numerical experiments illustrating our results are given in Section \ref{simus}. In Section \ref{noesti} we prove, among others, that it is not possible to determine based on a finite sample whether a set is standard. 
Concluding remarks are provided in Section \ref{sec:conclusion}. All the technical proofs are given in the Appendixes.

\section{Background and related work}\label{sec:back}

In this section, we provide background information and review relevant prior work that contextualizes our research. First, we introduce some notation that will be used throughout the paper. Then, we discuss some of the most common shape restrictions considered in the literature on set estimation, in order of decreasing restrictiveness: convexity, positive reach, $r$-convexity, $r$-rolling condition, and cone-convexity. We then present the formal definition of standardness and review its connection to the aforementioned shape restrictions. Additionally, we discuss the crucial role of standardness in the context of set estimation and TDA.

\subsection{Some notation}
Given a set $S\subset \mathbb{R}^d$, we will denote by
${\textnormal{int}}(S)$, $\overline{S}$ and $\partial S$ the interior, closure and boundary of $S$,
respectively, with respect to the usual topology of $\mathbb{R}^d$. 
 
If $A\subset\mathbb{R}^d$ is a Borel set, then $\mu(A)$ will denote 
its $d$-dimensional Lebesgue measure. We will denote by $\mathrm{B}(x,\varepsilon)$ the closed ball
in $\mathbb{R}^d$, of radius $\varepsilon$, centred at $x$, and by $\omega_d$ the $d$-dimensional Lebesgue measure of the unit ball in $\mathbb{R}^d$.  The parallel set of $S$ of radii $\varepsilon$ will be denoted as $\mathrm{B}(S,\eps)$, that is
$\mathrm{B}(S,\eps) =\{y\in{\mathbb R}^d:\ \inf_{x\in S}\Vert y-x\Vert\leq \eps \}$.
Given a measure $\nu$, we use the notation $\nu\ll \mu$ to indicate that  $\nu$ is absolutely continuous with respect to the Lebesgue measure. The support of a probability measure $\nu$ on $\mathbb{R}^d$ is denoted by ${\textnormal{supp}}(\nu)$.
The Hausdorff distance between two compact non-empty sets in $A, C\subset{\mathbb R}^d$ is given by $d_H(A,C) = \inf\{\epsilon>0: A \subset \mathrm{B}(C,\eps),\ C \subset \mathrm{B}(A,\eps)\}$.

\subsection{Shape constraints beyond convexity}
Convexity is a key concept that has been studied extensively in mathematics. In the context of set estimation, convexity is perhaps the most classical geometric restriction. Since the early works in the 1960s, many contributions have been made in this field, with a focus on various aspects including support estimation \cite{kor93, dumbgen96}, volume and boundary measure estimation \cite{bra98, bal16}, or level set estimation \cite{har87, pol95}. We refer to \cite{bru17} for a recent review of methods of set estimation in the convex case. Nonetheless, convexity can be limiting when dealing with problems that involve more flexible shape constraints. To address this limitation, a lot of research has focused on extending the concept of convexity to more general shape constraints. 

\ 

\noindent{\emph{Positive reach.}} The concept of sets of positive reach was first introduced by \cite{federer59}, as a more flexible alternative to the classical geometric restriction of convexity. While in  a convex set all points in the complement admit a unique projection onto it, in a set with positive reach, only the points of the $r$-parallel neighbourhood of the set are required to have a unique projection. For instance, only the set (c) in Figure \ref{fig:stand} has positive reach.
 
%In addition to the definition of reach, \cite{federer59}  obtains a generalization of Steiner's Theorem and demonstrates the existence of curvature measures, as well as other significant results. 
Numerous authors have further explored the properties of sets of positive reach, see \cite{tha08} for a review. An application of the positive reach condition in set estimation can be found in \cite{cue07}, where the goal is the estimation of the boundary measure of a set.  In \cite{ch2021reach}, a universally consistent estimator for the reach of a set is proposed. Recently, a novel estimator for the reach of manifold-valued data was introduced in \cite{berenfeld2022b}. This method assumes that the manifold is of at least class $C^3$ and requires prior estimation using the manifold estimator proposed in \cite{aamari2019}.

\

\noindent{\emph{$r$-convexity.}} The notion of convexity can also be generalized by that of $r$-convexity. A set $S$ is $r$-convex, with $r>0$, if any point of its complement can be separated from it by an open ball of radius $r$. For instance, out of the sets depicted in Figure \ref{fig:stand}, only set (c) satisfies the assumption of $r$-convexity. %One way to understand how the notion of $r$-convexity generalizes that of convexity is through the characterization of convex sets in terms of supporting hyperplanes. In the case of $r$-convex sets, the supporting hyperplanes would be replaced by supporting balls of radius $r$. 
We refer to \cite{wal99} as the first application of this condition on set estimation, specifically in the context of level set estimation. It can be also found in \cite{rc07,pat13, rod16}, in the context of the problem of estimating a set or its boundary from a random sample of points.
 
Proposition 1 in \cite{cue12} proves that every set with reach $r>0$ is also $r$-convex.  Conversely, Theorem 6 in \cite{cue12} proves that if $S$ is a compact $r$-convex set in $\mathbb{R}^2$ fulfilling an additional mild regularity condition, then $S$ has positive reach (not necessarily $r$).

\ 

\noindent{\emph{Rolling condition.}}
Rolling-type conditions and their relation with the smoothness of the boundary of a set were studied in \cite{wal99}.

\begin{definition}\label{def-outroll}  A closed set $S\subset \mathbb{R}^d$ is said to fulfil the (outside) $r$-rolling condition with $r>0$ if for all $s \in \partial S$, there exists some $x\in S^c$,  such that $\mathrm{B}(x,r)\cap\partial S=\{s\}$. 
 \end{definition}
 
An (inside) $r$-rolling condition could be similarly defined by imposing the condition on $\overline{S^c}$. Such double rolling-type smoothness assumption has been considered, for instance, in \cite{rod16} for the estimation of the support, in \cite{ari17, ari19, aar22}, for the estimation of functionals of a set, such as volume or surface area, and in \cite{rod22}, for density level sets estimation. There is a close connection between the $r$-rolling condition and $r$-convexity. Proposition 2 in \cite{cue12} proves that the $r$-convexity implies the  (outside) $r$-rolling condition. In general, the converse implication does not hold. 
 
Also, if both $S$ and $\overline{S^c}$  satisfy the (outside) $r$-rolling condition then, $S$ and $\overline{S^c}$ are both $r$-convex and $\partial S$ has positive reach, see \cite{wal99} and \cite{pat09}.  Of the sets displayed in Figure \ref{fig:stand}, the set in (b) satisfies an (interior) $r$-rolling condition, while the set in (c) satisfies an (outside) $r$-rolling condition.

\ 

\noindent{\emph{Cone-convexity.}} A  set $S$ is said to fulfil the $\rho$-cone-convex property, with $\rho\in(0,\pi]$, if any point $x\in\partial S$ is the vertex of an open finite cone with opening angle $\rho$, which does not intersect $S$. As with $r$-convexity, one could understand cone-convexity as a generalization of convexity, by replacing in this case the supporting hyperplanes by supporting cones. Cone-convex sets may have indentations, with their level of sharpness being constrained by the angle $\rho$. For example, only set (b) in Figure \ref{fig:stand} fails to satisfy the cone-convexity condition. Note that the cone-convexity is a generalization of the rolling-type conditions commented above. If $S$ satisfies the (outside) $r$-rolling condition, then it is $\rho$-cone-convex for $\rho<\pi/2$. The converse implication does not hold. We refer to \cite{chola14} for more details on cone-convexity and its application in set estimation.

\ 

\noindent{\emph{Standardness.}} This restriction, as given in (\ref{st1}) with respect to the Lebesgue measure, leads to a broader family of sets than that of convex sets. Regarding the other above mentioned shape constraints, it can be easily seen that standardness with respect to $\mu$ is implied by, but not equivalent to, an (inside) $r$-rolling condition. There is a more general version of standardness.

\begin{definition}\label{def-stand}  A Borel set $S\subset \mathbb{R}^d$ is said to be standard with respect to a
	Borel measure $\nu$ 	if there exists $\lambda>0$ and $\delta>0$ such that, for all $x\in S$,
	\begin{equation} \label{estandar}
		\nu(\mathrm{B}(x,\eps)\cap S)\geq \delta \mu(\mathrm{B}(x,\eps)),\quad 0<\eps\leq \lambda.
	\end{equation}
 \end{definition}
 
This more general definition of standardness makes more sense in set estimation, where it is typically assumed that $S$ is the support of a probability distribution $\nu$. In that case, we say that $S$ is standard with respect to $\nu$, or alternatively, that $\nu$ is standard. As we said before, when $\nu$ is the uniform distribution, standardness reduces to a shape constraint on the set $S$, since it is satisfied whenever the set is standard with respect to the Lebesgue measure. When the distribution is not uniform, standardness represents an interplay between the geometry of the set and the measure $\nu$, more than just a shape restriction on $S$. Therefore, standardness becomes an additional regularity constraint to the possible shape constraints of the set to be imposed. This condition was first introduced in the context of set estimation in \cite{cuevas1990pattern}. It also appears in other contexts under different
 names. For example, in the study of function spaces (see the definitions of  $d$-measure and $d$-set in \cite{jonsson1984}). In \cite{benjamini2004boundary} this $d$-set restriction is imposed to study the `Hausdorff dimensions of various random sets associated to the Reflected Brownian Motion'. Revisiting its application in set estimation, the standardness condition can be found in \cite{cuevas1997plug,cuevas2004,chola14,aar16, aaron2017}, among many others. Proposition 1 in \cite{aaron2017} proves that, the (inside) $r$-rolling condition implies standardness with respect to measures $\nu$ whose density with respect to the Lebesgue measure is bounded from below by a positive constant. A double standardness assumption (often referred to in the literature as Ahlfors regularity) was considered in \cite{cue13}, for the problem of the estimation of the Minkowski content. A generalization of the standardness assumption to lower-dimensional sets can be found in \cite{rin10}.

\subsection{Standardness in set estimation}
In what follows, $\aleph_n=\{X_1,\dots,X_n\}$ will denote a sample of independent observations generated from a distribution $\nu$ supported on a set $S\subset\mathbb{R}^d$.  This is the most common sampling scheme in set estimation, and the one we will consider in this work.  There are alternatives to this i.i.d. model, such as the Poisson model or the so-called inner-outer model, see \cite{cue09}. %In the Poisson model, the sample is given by observations of a Poisson process whose intensity function is supported on $S$. In the inner-outer model, the data consists  of i.i.d. points, taken in a set containing $S$, and for each observation, we are able to identify whether it belongs to $S$ or not. A data-dependent sampling scheme, also considered in the literature, assumes that the data comes from the trajectory of a stochastic process living on $S$, observed in an interval $[0,T]$. This approach is of paramount importance in ecology, where $S$ can represent the home-range of an animal, and the goal is to estimate $S$ from a set of locations collected over a period of time, see \cite{cho16, ch21, baillo2021} and references therein.

Standardness arises, within the context of set estimation, in the study of the convergence rate of the Hausdorff distance between $S$ and $\aleph_n$. To be more specific, the hypothesis that $S$ is standard with respect to  $\nu$ is used in \cite{cuevas2004} (Theorem 3) to demonstrate that, for any $\epsilon>0$,  
\[\mathbb{P}(d_H(\aleph_n,S)> 2\epsilon)\leq C\epsilon^{-d}\exp(-n\delta\omega_d\epsilon^d),\]
where $C$ is a constant depending on $S$. This result, in turn, allows us to obtain convergence rates for $d_H(\aleph_n,S)$. As expected, the smaller the value of $\delta$ (which can be interpreted in the uniform case as the set $S$ having sharper peaks), the slower the rate.  The standardness condition is also relevant for obtaining convergence rates for the well-known Devroye-Wise estimator, introduced in \cite{luc80}, as well as for other usual set estimators. Given a sequence of positive real numbers,  $\eps_n\to 0$, the Devroye-Wise estimator is given by
\begin{equation*}\label{devwise}
\hat{S}_n=\bigcup_{i=1}^n \mathrm{B}(X_i,\eps_n).
\end{equation*}
 The  parameter $\eps_n$ must be chosen by the practitioner and depends on the standardness of $S$ with respect to $\nu$ (according to \cite{cuevas2004}, it should be larger than $[2\log(n)/(n\delta\omega_d)]^{1/d}$, where $\delta$ is any value such that \eqref{estandar} holds). Then, a proper choice for the constant $\delta$ is crucial, since an underestimation produces a slower convergence. 
 Moreover, several functionals of $S$ can be estimated by means of $\hat{S}_n$ and, therefore, the convergence rates will also depend on the choice of $\delta$. For instance, $\partial S$ can be consistently estimated by $\partial \hat{S}_n$, choosing the value of $\eps_n$ as before. The estimator $\hat{S}_n$ is also used as an input to define an estimator of the $(d-1)$-Lebesgue measure of $\partial S$, see \cite{aar22}. Standardness plays an important role also to determine, from a sample, if the support of the distribution is lower dimensional or not (see \cite{aaron2017} point (4) in Theorem 1).

%\subsection{Topological data analysis}
  
 Standardness has also gained attention recently within the context of TDA,  in persistent homology, where a generalization, called $(a,b)$-standard assumption, is defined, see \cite{cha15, chazal2015subsampling,chazal2016rates}. It is said that a probability measure $\nu$ in $\mathbb{R}^d$, whose support is a compact set $S$, satisfies the $(a,b)$-standard assumption if there exist $a, b>0$, such that, for every $x\in S$ and every $\eps>0$, $\nu(\mathrm{B}(x,\eps))\geq 1 \wedge a\eps^b$ (in the so-called $(a,b,r_0)$-standard assumption, this inequality holds for all $0\leq \eps<r_0$). Note that the $(a,b)$-standard assumption generalizes condition (\ref{estandar}), since it allows for $b>0$ to not necessarily be equal to $d$.
%Let $\mathcal{P}$ denote a certain well-defined persistence diagram associated with the support $S$ of a probability distribution $\nu$. In \cite{fasy2014confidence}, the authors obtain confidence sets for $\mathcal{P}$ based on an estimate $\hat{\mathcal{P}}$, constructed from a sample $\aleph_n$ generated from $\nu$. In the related paper \cite{cha15}, convergence rates of $\hat{\mathcal{P}}$ to $\mathcal{P}$ are obtained.
The connection between TDA and set estimation arises from the fundamental property of stability in persistence diagrams \cite{cha12}. This property allows us to demonstrate that the bottleneck distance (a usual metric on the space of persistence diagrams) between a certain well-defined persistence diagram ${\mathcal{P}}$ associated with the support $S$ of a probability distribution $\nu$ and an estimate $\hat{\mathcal{P}}$ constructed from a sample $\aleph_n$ generated from $\nu$, is upper bounded by $d_H(\aleph_n, S)$, see \cite{fasy2014confidence},  \cite{cha15}. Therefore, the problem of estimating persistence diagrams can be related to the problem of estimating the support of a measure, with the standardness condition playing a crucial role, as previously mentioned.

\section{The standardness constant}\label{sec-back}

In this section, we analyze the formal definition of standardness and introduce an additional constraint necessary for our study. We also present results proving that, under certain conditions, both standardness and this additional constraint are satisfied.

If a set $S\subset \mathbb{R}^d$ is standard with respect to a Borel measure $\nu$, as given in Definition \ref{def-stand}, and \eqref{estandar} is satisfied for some pair $(\delta, \lambda)$, then \eqref{estandar} also holds for any $\delta' < \delta$ with the same $\lambda$. Thus, we will define the standardness constant associated to $S$ as the largest value for $\delta$, for which \eqref{estandar} holds. 

 \begin{definition}\label{def:cte}
     Assume that $S\subset \mathbb{R}^d$ is standard with respect to $\nu$ and let $A$ be the set of all pairs $(\delta, \lambda)\in \mathbb{R}^+\times \mathbb{R}^+$ fulfilling \eqref{estandar}.   We define the standardness constant
\begin{equation}\label{stconst}
	\Upsilon(S,\nu) = \sup_{(\delta, \lambda) \in A} \delta.
	\end{equation}
 \end{definition}

The following shape restriction (slightly more restrictive than standardness, see Remark \ref{rem1})  will be required:

\

{{(H1)}} The support $S\subset\mathbb{R}^d$ of a probability measure $\nu$ satisfies
 
\begin{equation}\label{H1}
	0<\lim_{r\to 0} \inf_{x\in S} \frac{\nu(\mathrm{B}(x,r))}{r^d\omega_d}=:\delta_{\text{est}}<\infty.
\end{equation}

While the standardness constant $\Upsilon(S,\nu)$, as given in Definition \ref{def:cte}, arises naturally from the concept of standardness, the restriction in (H1) and the definition of $\delta_{\text{est}}$ provide a very natural framework for defining a plug-in estimator. Moreover, the results presented later demonstrate that $\Upsilon(S,\nu)$ and $\delta_{\text{est}}$ coincide in many situations of interest. In particular, in case that the limit in \eqref{H1} exists and is finite, the following Lemma states that $S$ is standard with respect to $\nu$, with standardness constant $\delta_{\text{est}}$.

\begin{lemma}\label{lemaleo} Assume that the support $S\subset\mathbb{R}^d$ of a probability measure $\nu$ fulfils condition {\textnormal{(H1)}}. Then, $S$ is standard with respect to  $\nu$ and 
	\begin{equation*}
		\Upsilon(S,\nu)=\lim_{r\to 0} \inf_{x\in S} \frac{\nu(\mathrm{B}(x,r))}{r^d\omega_d},
	\end{equation*}
where $\Upsilon(S,\nu)$ is the standardness constant defined by \eqref{stconst}.
\end{lemma}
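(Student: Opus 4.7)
The plan is to prove the equality $\Upsilon(S,\nu)=\delta_{\text{est}}$ by a two-sided argument, using Definition \ref{def:cte} together with the key observation that, since $\nu$ is supported on $S$ (which is closed), one has $\nu(\mathrm{B}(x,\eps)\cap S)=\nu(\mathrm{B}(x,\eps))$ for every $x\in S$ and every $\eps>0$. This turns the standardness condition \eqref{estandar} into the pointwise inequality $\nu(\mathrm{B}(x,\eps))\geq \delta\,\eps^d\omega_d$.

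For the lower bound $\Upsilon(S,\nu)\geq \delta_{\text{est}}$ (which also establishes standardness), I would fix an arbitrary $\eta\in(0,\delta_{\text{est}})$ and use the definition of the limit in (H1): there exists $\lambda>0$ such that for every $0<r\leq\lambda$,
\begin{equation*}
\inf_{x\in S}\frac{\nu(\mathrm{B}(x,r))}{r^d\omega_d}\geq \delta_{\text{est}}-\eta.
\end{equation*}
Rewriting this as $\nu(\mathrm{B}(x,r)\cap S)\geq (\delta_{\text{est}}-\eta)\,\mu(\mathrm{B}(x,r))$ for all $x\in S$ and $0<r\leq\lambda$ shows that $(\delta_{\text{est}}-\eta,\lambda)\in A$; in particular $S$ is standard with respect to $\nu$, and taking the supremum over the admissible $\delta$'s yields $\Upsilon(S,\nu)\geq \delta_{\text{est}}-\eta$. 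Letting $\eta\downarrow 0$ gives the desired bound.

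For the reverse inequality $\Upsilon(S,\nu)\leq \delta_{\text{est}}$, I would take an arbitrary pair $(\delta,\lambda)\in A$ and unwind the definition: for every $x\in S$ and $0<\eps\leq \lambda$,
\begin{equation*}
\frac{\nu(\mathrm{B}(x,\eps))}{\eps^d\omega_d}\;=\;\frac{\nu(\mathrm{B}(x,\eps)\cap S)}{\eps^d\omega_d}\;\geq\;\delta.
\end{equation*}
Taking the infimum over $x\in S$ and then the limit $\eps\to 0$ (which exists by hypothesis (H1)) gives $\delta_{\text{est}}\geq\delta$. Since this is valid for every $(\delta,\lambda)\in A$, the supremum $\Upsilon(S,\nu)$ is itself bounded above by $\delta_{\text{est}}$, concluding the proof.

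The argument is essentially bookkeeping with definitions, so I do not anticipate a genuine obstacle; the only point requiring minor care is the identification $\nu(\mathrm{B}(x,\eps)\cap S)=\nu(\mathrm{B}(x,\eps))$, which relies on $S$ being the support and hence $\nu(S^{c})=0$, and the fact that in the lower-bound direction one must extract a \emph{uniform} $\lambda$ from the limit in (H1), which is immediate since the $\inf_{x\in S}$ is already built into the limit.
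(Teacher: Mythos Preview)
Your proof is correct and follows essentially the same two-sided approach as the paper: both establish $\Upsilon(S,\nu)\leq\delta_{\text{est}}$ by taking an arbitrary $(\delta,\lambda)\in A$ and passing to the limit, and both establish $\Upsilon(S,\nu)\geq\delta_{\text{est}}$ by producing, for any target slightly below $\delta_{\text{est}}$, an admissible pair in $A$. The only cosmetic difference is that the paper phrases the second direction as a contradiction argument (assuming $\Upsilon(S,\nu)<\delta_{\text{est}}$ and exhibiting $(\delta^*,r^*)\in A$ with $\delta^*>\Upsilon(S,\nu)$), whereas you give the equivalent direct $\eta\downarrow 0$ version.
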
	

\begin{remark}\label{rem1} The converse implication in Lemma \ref{lemaleo} is not true in general, that is, there exists a set $S$ and a measure $\nu$ such that $S$ is standard with respect to $\nu$ but the limit \eqref{H1} does not exist: consider $S\subset [0,1]$ the ternary Cantor's set and $\nu$ the measure defined by the Cantor's function, restricted to $S$. Standardness follows from Proposition 5.5 in \cite{dov2006}, while the non-existence of the limit in \eqref{H1}  follows from Proposition 9.1 in \cite{dov2006}.
\end{remark}

Next, Proposition \ref{lemgeo} states that if $\nu$ has a continuous density  and $S$ is standard with respect to $\mu$, then $S$ is also standard with respect to $\nu$, and $\Upsilon(S,\nu)$ is either the minimum of the density, 
or is determined by an interplay between the density on the boundary and the standardness of $S$ with respect to $\mu$. This result further supports the previously mentioned observation regarding standardness, which is influenced by both the geometry of the support and the measure itself. First, Lemma \ref{lemauxst} states that if $S$ is compact, then the infimum in \eqref{H1} is attained. The proof is left as an easy exercise.

\begin{lemma}\label{lemauxst} Let $S\subset \mathbb{R}^d$ and $\nu$ a probability distribution supported on $S$ such that $\nu(\partial \mathrm{B}(x,r))=0$ for all $r>0$ and $x\in S$. Then,
	$\nu(\mathrm{B}(x,r))$ is a continuous function of $(x,r)$.
\end{lemma}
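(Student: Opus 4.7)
The plan is to apply the dominated convergence theorem to the indicator functions $\mathbf{1}_{\mathrm{B}(x_n,r_n)}$ along an arbitrary sequence $(x_n,r_n)\to (x_0,r_0)$ in $S\times (0,\infty)$, using the hypothesis $\nu(\partial \mathrm{B}(x_0,r_0))=0$ to pass convergence through the integral.

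First, I would verify pointwise convergence off the boundary sphere. Fix $y\in\mathbb{R}^d$ with $\|y-x_0\|\neq r_0$. If $\|y-x_0\|<r_0$, then by the triangle inequality $\|y-x_n\|\leq \|y-x_0\|+\|x_0-x_n\|$, which is eventually strictly less than $r_n$ because $\|x_0-x_n\|\to 0$ and $r_n\to r_0>\|y-x_0\|$; hence $y\in \mathrm{B}(x_n,r_n)$ for all sufficiently large $n$. Symmetrically, if $\|y-x_0\|>r_0$, then $\|y-x_n\|\geq \|y-x_0\|-\|x_0-x_n\|$ is eventually strictly greater than $r_n$, so $y\notin \mathrm{B}(x_n,r_n)$ for all sufficiently large $n$. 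In both cases $\mathbf{1}_{\mathrm{B}(x_n,r_n)}(y)\to \mathbf{1}_{\mathrm{B}(x_0,r_0)}(y)$.

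Second, the exceptional set where this pointwise argument may fail is precisely $\partial\mathrm{B}(x_0,r_0)=\{y:\|y-x_0\|=r_0\}$, which has $\nu$-measure zero by hypothesis. Thus the pointwise convergence of the indicators holds $\nu$-almost everywhere, and since each indicator is dominated by the constant function $1$ (which is $\nu$-integrable because $\nu$ is a probability measure), the dominated convergence theorem gives
\begin{equation*}
\nu(\mathrm{B}(x_n,r_n))=\int \mathbf{1}_{\mathrm{B}(x_n,r_n)}\,d\nu \longrightarrow \int \mathbf{1}_{\mathrm{B}(x_0,r_0)}\,d\nu=\nu(\mathrm{B}(x_0,r_0)).
\end{equation*}
Because the sequence $(x_n,r_n)$ was arbitrary, sequential continuity at $(x_0,r_0)$ follows, and hence continuity on all of $\mathbb{R}^d\times(0,\infty)$.

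There is essentially no obstacle here; the only point worth stressing is that the hypothesis $\nu(\partial \mathrm{B}(x,r))=0$ is exactly what rules out jumps of $\nu(\mathrm{B}(\cdot,\cdot))$ when the moving ball sweeps mass across its boundary sphere, so that pointwise a.e.\ convergence combined with a trivial integrable dominator suffices.
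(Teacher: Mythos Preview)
Your argument is correct: pointwise convergence of the indicators off the limiting sphere together with the trivial dominator $1$ and the dominated convergence theorem is exactly the right mechanism, and the hypothesis $\nu(\partial\mathrm{B}(x_0,r_0))=0$ is used precisely where it is needed. The paper itself does not give a proof of this lemma (it is ``left as an easy exercise''), so there is no alternative argument to compare with; your write-up is a clean realization of the intended exercise. One cosmetic remark: at the end you claim continuity on $\mathbb{R}^d\times(0,\infty)$, but the hypothesis only guarantees $\nu(\partial\mathrm{B}(x,r))=0$ for $x\in S$, so strictly speaking your argument yields continuity on $S\times(0,\infty)$, which is all the paper needs.
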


\begin{proposition}\label{lemgeo}
	Let $S\subset\mathbb{R}^d$ be a compact set such that
	\begin{equation}\label{prophip}
		0<\mathcal{L}_{\partial S}=\lim_{r\to 0} \min_{x\in \partial S}\frac{\mu(\mathrm{B}(x,r)\cap S)}{r^d\omega_d}<\infty.
	\end{equation}
	Assume that  $\nu\ll \mu$ and the density $f$ of $\nu$ with respect to $\mu$ is continuous and bounded from below on $S$. Then,   $S$ is standard with respect to $\nu$, and condition {\textnormal{(H1)}} is fulfilled. Moreover, 
	\begin{equation}\label{estdens}
		\Upsilon(S,\nu)=\min\Bigg\{ \lim_{r\to 0} \min_{x\in \partial S}  f(x) \frac{\mu(\mathrm{B}(x,r)\cap S)}{r^d\omega_d}, \min_{x\in S} f(x)\Bigg\}.
	\end{equation}
\end{proposition}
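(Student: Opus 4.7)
The plan is to invoke Lemma~\ref{lemaleo}: it suffices to prove that condition {\textnormal{(H1)}} holds and that the resulting $\delta_{\text{est}}$ equals the right-hand side of~\eqref{estdens}. Since $\textnormal{supp}(\nu)=S$, for $x\in S$ we have $\nu(\mathrm{B}(x,r))=\nu(\mathrm{B}(x,r)\cap S)$; set $\tilde h_r(x):=\mu(\mathrm{B}(x,r)\cap S)/(\omega_d r^d)\in[0,1]$. The overall strategy is to first replace $\nu$ by $f\mu$ up to a uniform error, and then analyze $\inf_{x\in S} f(x)\tilde h_r(x)$ by splitting $S$ according to the distance $d(x,\partial S)$.

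First, I reduce the question to one about Lebesgue measure only. Because $f$ is continuous on the compact set $S$, it is uniformly continuous and bounded below by $m:=\min_{x\in S}f(x)>0$. Writing $\nu(\mathrm{B}(x,r))=\int_{\mathrm{B}(x,r)\cap S} f\,d\mu$ and pulling $f(x)$ out modulo the uniform continuity error, for every $\eta>0$, every sufficiently small $r>0$, and every $x\in S$ one has
\[
(f(x)-\eta)\,\tilde h_r(x)\;\le\;\frac{\nu(\mathrm{B}(x,r))}{\omega_d r^d}\;\le\;(f(x)+\eta)\,\tilde h_r(x),
\]
so the proposition reduces to computing $\lim_{r\to 0}\inf_{x\in S}f(x)\tilde h_r(x)$.

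For the upper bound, the two candidates in~\eqref{estdens} come from two natural choices of test point. Evaluating at a global minimizer $x_0$ of $f$ gives $f(x_0)\tilde h_r(x_0)\le f(x_0)=m$, so $\limsup_{r\to 0}\inf_{x}f(x)\tilde h_r(x)\le m$. Evaluating at a minimizer $y_r\in\partial S$ of $f(y)\tilde h_r(y)$ on $\partial S$ (which exists by Lemma~\ref{lemauxst} and compactness of $\partial S$) yields $\limsup_{r\to 0}\inf_xf(x)\tilde h_r(x)\le\limsup_{r\to 0}\min_{y\in\partial S}f(y)\tilde h_r(y)$. Once the matching lower bound is established, this also produces the existence of the boundary limit $M_\partial:=\lim_{r\to 0}\min_{y\in\partial S}f(y)\tilde h_r(y)$ claimed in~\eqref{estdens}.

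The main obstacle is the lower bound. For $x\in S$ with $d(x,\partial S)\ge r$ one has $\tilde h_r(x)=1$, so $f(x)\tilde h_r(x)\ge m$; for $x\in\partial S$ the contribution is at least $\min_{y\in\partial S}f(y)\tilde h_r(y)$. The delicate regime is $x\in\textnormal{int}(S)$ with $0<\delta:=d(x,\partial S)<r$: taking $y\in\partial S$ nearest to $x$, the two inclusions $\mathrm{B}(x,\delta)\subset S$ and $\mathrm{B}(x,r)\supset\mathrm{B}(y,r-\delta)$ imply
\[
\tilde h_r(x)\;\ge\;\max\bigl\{(\delta/r)^d,\,(1-\delta/r)^d\,\tilde h_{r-\delta}(y)\bigr\}.
\]
The hard step is to combine these two bounds with~\eqref{prophip} and the uniform continuity of $f$ to absorb, in the limit $r\to 0$, the geometric loss in $(1-\delta/r)^d$ at intermediate values of $\delta/r$, obtaining $f(x)\tilde h_r(x)\ge\min(m,M_\partial)-\eta$ uniformly in $x\in S$. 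Matching the upper and lower bounds and applying Lemma~\ref{lemaleo} then yields both {\textnormal{(H1)}} and the formula~\eqref{estdens}.
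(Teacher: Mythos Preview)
Your reduction to studying $\inf_{x\in S} f(x)\tilde h_r(x)$ via uniform continuity, and your upper bound argument, are fine and match the paper's proof. The essential difference lies in the lower bound, where you attempt a uniform estimate over the three regimes of $\delta=d(x,\partial S)$, whereas the paper extracts a sequence $r_n\to 0$ and minimizers $x_n\in\arg\min_{x\in S}\psi(x,r_n)$, passes to a subsequence with $x_n\to x^*\in S$, and treats separately the cases $x^*\in\textnormal{int}(S)$ and $x^*\in\partial S$.

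Your approach, as stated, has a genuine gap in the intermediate regime $0<\delta<r$. The two inclusions you record give
\[
f(x)\tilde h_r(x)\;\ge\;\max\Bigl\{m\,t^d,\ (1-t)^d\bigl(M_\partial-\eta\bigr)\Bigr\},\qquad t:=\delta/r\in(0,1),
\]
and you propose to ``absorb, in the limit $r\to 0$, the geometric loss in $(1-t)^d$''. But the right-hand side depends on $r$ only through the ratio $t=\delta/r$, which ranges over all of $(0,1)$ for every $r>0$; sending $r\to0$ changes nothing. Concretely, take $d=2$, $S$ the unit disk, and $f\equiv 1/\pi$, so that $m=1/\pi$ and $M_\partial=1/(2\pi)$. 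At $t=1/2$ your bound gives at most $\max\{m/4,M_\partial/4\}=1/(4\pi)$, strictly below the required $\min(m,M_\partial)=1/(2\pi)$. Thus the two inclusions alone cannot close the argument; some additional geometric input is needed for points at distance comparable to $r$ from $\partial S$.

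The compactness route in the paper sidesteps this precisely because it does not demand a uniform-in-$t$ estimate: along the minimizing subsequence one only has to handle the limiting location $x^*$. You may want to adopt that strategy, or else strengthen your intermediate-regime bound (for instance by exploiting that $B(x,\delta)\subset S$ and $B(y,r-\delta)\cap S$ are both inside $B(x,r)\cap S$ and quantifying their overlap) rather than merely taking the maximum of the two separate inclusions.
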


The following corollaries are a consequence of Proposition \ref{lemgeo}. Corollary \ref{coro1},  states that, roughly speaking, when $\nu$ is uniformly distributed on $S$, the value of $\Upsilon(S,\nu)$ is determined by the sharpness of the outward peaks.  Corollary \ref{coro2}, states that if the boundary is smooth, $\Upsilon(S,\nu)$ equals $1/(2\mu(S))$.

\begin{corollary} \label{coro1} 	Let $S\subset \mathbb{R}^d$ be a compact set  such that the limit \eqref{prophip} exists. Then $S$ is standard with respect to the measure $\nu=\mu/\mu(S)$ and 
$$\Upsilon(S,\nu)=\lim_{r\to 0}\min_{x\in \partial S} \frac{\mu(\mathrm{B}(x,r)\cap S)}{r^d\omega_d\mu(S)}.$$
\end{corollary}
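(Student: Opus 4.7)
The plan is to deduce the corollary directly from Proposition \ref{lemgeo} by specializing to the uniform case. Setting $\nu = \mu/\mu(S)$, the density of $\nu$ with respect to $\mu$ is the constant function $f \equiv 1/\mu(S)$ on the compact support $S$. This $f$ is trivially continuous and bounded from below by the positive constant $1/\mu(S)$, so together with the assumption that the limit in \eqref{prophip} exists, all hypotheses of Proposition \ref{lemgeo} are satisfied. Applying that proposition gives that $S$ is standard with respect to $\nu$, condition \textnormal{(H1)} holds, and formula \eqref{estdens} is valid.

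Substituting $f \equiv 1/\mu(S)$ into \eqref{estdens}, I would then read off
\begin{equation*}
\Upsilon(S,\nu) = \min\Bigg\{ \frac{1}{\mu(S)} \lim_{r\to 0} \min_{x\in \partial S} \frac{\mu(\mathrm{B}(x,r)\cap S)}{r^d\omega_d},\ \frac{1}{\mu(S)} \Bigg\}.
\end{equation*}
To match the stated formula, the remaining step is to show the minimum is achieved by the first argument. This follows from the elementary bound $\mu(\mathrm{B}(x,r) \cap S) \leq \mu(\mathrm{B}(x,r)) = r^d\omega_d$, valid for every $x \in \partial S$ and every $r > 0$, which passes through the minimum over $x \in \partial S$ and the limit $r \to 0$ to yield $\mathcal{L}_{\partial S} \leq 1$. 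Thus the first argument of the minimum is at most $1/\mu(S)$, and the minimum equals that first argument, giving the claimed expression. No substantive obstacle is anticipated: the result is essentially a textbook specialization of Proposition \ref{lemgeo}, together with the trivial volume comparison $\mathcal{L}_{\partial S} \leq 1$.
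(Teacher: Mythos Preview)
Your proposal is correct and matches the paper's approach: the paper simply states that Corollary~\ref{coro1} is a consequence of Proposition~\ref{lemgeo}, and you have carried out exactly that specialization, supplying the minor additional observation $\mathcal{L}_{\partial S}\leq 1$ needed to identify which branch of the minimum in~\eqref{estdens} is active.
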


\begin{corollary} \label{coro2} Let $S\subset \mathbb{R}^d$ such that $S$ fulfils the inside $r$-rolling ball condition for some $r>0$ and define $\nu=\mu/\mu(S)$. Then, 
$$\Upsilon(S,\nu)\geq \frac{1}{2\mu(S)}.$$ 
Moreover, if $\partial S$ fulfils also the outside $r'$-rolling ball condition, for some $r'>0$, $\Upsilon(S,\nu)=1/(2\mu(S)).$ 
\end{corollary}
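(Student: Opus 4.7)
The plan is to prove the two parts separately using rolling-ball arguments. For the lower bound $\Upsilon(S,\nu)\geq 1/(2\mu(S))$, since $\nu=\mu/\mu(S)$, it suffices to show that for every $\eta>0$ there exists $\lambda>0$ with $\mu(\mathrm{B}(x,\varepsilon)\cap S)\geq (1/2-\eta)\mu(\mathrm{B}(x,\varepsilon))$ for all $x\in S$ and $0<\varepsilon\leq\lambda$; the lower bound then follows by letting $\eta\to 0$. I would bound the ratio $\mu(\mathrm{B}(x,\varepsilon)\cap S)/\mu(\mathrm{B}(x,\varepsilon))$ below uniformly in $x$ by a scale-invariant quantity $g(\varepsilon/r)$ with $g(\varepsilon/r)\to 1/2$ as $\varepsilon/r\to 0$.

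For $x\in\partial S$, the inside $r$-rolling condition provides a ball $\mathrm{B}(c_x,r)\subseteq S$ with $x\in\partial\mathrm{B}(c_x,r)$, so $\mu(\mathrm{B}(x,\varepsilon)\cap S)\geq\mu(\mathrm{B}(x,\varepsilon)\cap\mathrm{B}(c_x,r))$. By translation and rescaling, the normalised intersection $\mu(\mathrm{B}(x,\varepsilon)\cap\mathrm{B}(c_x,r))/\mu(\mathrm{B}(x,\varepsilon))$ depends only on $\varepsilon/r$; working in coordinates with the tangent hyperplane to $\mathrm{B}(c_x,r)$ at $x$ as $\{z_1=0\}$, this intersection converges to the half-ball $\mathrm{B}(x,\varepsilon)\cap\{z_1\geq 0\}$ as $\varepsilon/r\to 0$, so $g(\varepsilon/r)\to 1/2$. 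For $x\in\mathrm{int}(S)$ with $d(x,\partial S)\geq\varepsilon$, the ball $\mathrm{B}(x,\varepsilon)\subseteq S$ and the ratio equals $1$. For $x\in\mathrm{int}(S)$ with $d(x,\partial S)<\varepsilon<r$, I would let $y$ be the closest boundary point; inside rolling ensures that $\partial S$ is regular enough at $y$ for $x-y$ to be colinear with $c_y-y$, so $x\in[y,c_y]$ with $\|x-c_y\|=r-d(x,\partial S)\leq r$. The intersection volume $\mu(\mathrm{B}(x,\varepsilon)\cap\mathrm{B}(c_y,r))=h(\|x-c_y\|)$ for a radially non-increasing function $h$ (convolution of two ball indicators), so it is at least $h(r)=g(\varepsilon/r)\mu(\mathrm{B}(x,\varepsilon))$. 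Taking the infimum over $x\in S$ and letting $\varepsilon\to 0$ yields the uniform bound and hence the first claim.

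For the equality under the additional outside $r'$-rolling hypothesis, inside plus outside rolling imply, as recalled in the excerpt, that $\partial S$ has positive reach; in particular the limit in \eqref{prophip} exists, so Corollary \ref{coro1} applies and gives $\Upsilon(S,\nu)=\mathcal{L}_{\partial S}/\mu(S)$. Running the boundary argument of the previous paragraph for $\overline{S^c}$ with the outside rolling ball $\mathrm{B}(c'_x,r')\subseteq S^c$ gives $\mu(\mathrm{B}(x,\varepsilon)\cap S^c)/\mu(\mathrm{B}(x,\varepsilon))\geq g(\varepsilon/r')\to 1/2$, whence $\mu(\mathrm{B}(x,\varepsilon)\cap S)/\mu(\mathrm{B}(x,\varepsilon))\leq 1-g(\varepsilon/r')\to 1/2$. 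Combined with the lower bound from the first part this pins $\mathcal{L}_{\partial S}=1/2$ and therefore $\Upsilon(S,\nu)=1/(2\mu(S))$. The main obstacle is the regularity claim invoked in the interior subcase: showing that inside $r$-rolling alone forces $\partial S$ to be locally $C^{1,1}$, so that the closest-point projection onto $\partial S$ is well-defined on the $r$-neighbourhood of $\partial S$ in $S$ and aligns $x-y$ with $c_y-y$; once this is available, the convolution-based monotonicity of $h$ and the invocation of Corollary \ref{coro1} are routine.
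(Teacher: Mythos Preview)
The paper does not give an explicit proof of Corollary~\ref{coro2}; it is stated as an immediate consequence of Proposition~\ref{lemgeo} and Corollary~\ref{coro1}, together with the rolling-ball facts cited from \cite{wal99} and \cite{aaron2017}. Your overall strategy---bound $\mu(\mathrm{B}(x,\varepsilon)\cap S)/\mu(\mathrm{B}(x,\varepsilon))$ below by the two-ball intersection ratio $g(\varepsilon/r)\to 1/2$, and under double rolling get the matching upper bound via $1-g(\varepsilon/r')$---is exactly the intended one.

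The obstacle you flag is real, but the resolution is not the $C^{1,1}$ regularity you aim for (inside $r$-rolling alone does \emph{not} force $\partial S$ to be $C^{1,1}$; re-entrant corners are allowed, and at such a corner the rolling-ball centre is non-unique, so ``$x-y$ colinear with $c_y-y$'' can genuinely fail for some admissible $c_y$). The clean fix, which also eliminates your case split, is to use Walther's characterisation \cite{wal99}: inside $r$-rolling is equivalent to $S=(S\ominus r\mathrm{B})\oplus r\mathrm{B}$, i.e.\ $S$ is a union of closed $r$-balls. Hence \emph{every} $x\in S$ lies in some $\mathrm{B}(c,r)\subset S$, so $\|x-c\|\le r$, and your monotonicity-of-convolution argument gives
\[
\frac{\mu(\mathrm{B}(x,\varepsilon)\cap S)}{\mu(\mathrm{B}(x,\varepsilon))}\ \ge\ \frac{\mu(\mathrm{B}(x,\varepsilon)\cap \mathrm{B}(c,r))}{\mu(\mathrm{B}(x,\varepsilon))}\ \ge\ g(\varepsilon/r)\ \longrightarrow\ \tfrac12
\]
uniformly in $x\in S$, with no need to locate the nearest boundary point or to discuss the regularity of $\partial S$.

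For the equality under double rolling your argument is fine. One small point: you invoke that positive reach of $\partial S$ ``in particular'' makes the limit \eqref{prophip} exist; strictly speaking what you need is the sandwich $g(\varepsilon/r)\le \mu(\mathrm{B}(x,\varepsilon)\cap S)/\mu(\mathrm{B}(x,\varepsilon))\le 1-g(\varepsilon/r')$ for every $x\in\partial S$, which follows from having tangent balls on both sides (this is what double rolling gives directly), and then both bounds tend to $1/2$. Invoking Corollary~\ref{coro1} then finishes as you describe.
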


\section{Estimation of the standardness constant}\label{sec:est_stand}
 
This section presents the proposed estimators and provides the theoretical results regarding their performance. Let $\aleph_n=\{X_1,\dots,X_n\}$ be an i.i.d. sample of a random vector  $X$ whose distribution, $\nu$, is supported on a compact set $S\subset \mathbb{R}^d$. First we prove that, under very general conditions, the plug-in estimator 	
\begin{equation}\label{hatest0}
	\hat{\Upsilon}_n\equiv \hat{\Upsilon}_n(S,\nu)=  \min_j\frac{\#\{\aleph_n\cap \mathrm{B}(X_j,r_n)\}/n}{r_n^d\omega_d}.
\end{equation}
is a.s. consistent. Recall that the consistency in probability of \eqref{hatest0} is obtained in Theorem 5 in \cite{fasy2014confidence}, under the hypotheses that $\nu$ has a density with respect to Lebesgue measure, along with additional assumptions that $\rho(x,t)=\nu(\mathrm{B}(x,t/2))/t^d$ is bounded and continuous as a function of $t$, differentiable for $t\in(0,t_0)$ and right differentiable at $0$. The partial derivative $\partial \rho(x,t)/\partial t$ is assumed to exist and to be bounded away from zero and infinity for $t$ in an open neighbourhood of zero. They also assume that 
$$\sup_x \sup_{0\leq t \leq t_0 }\left| \frac{\partial \rho(x,t)}{\partial t}\right|\leq C_1<\infty 	\quad \text{and}\quad \sup_{0\leq t\leq t_0}\Big |[\inf_x \rho(x,t)]'\Big|\leq C_2<\infty.$$
We prove in Theorem \ref{th:esti} the almost sure consistency of \eqref{hatest0} under weaker assumptions on $S$.

\begin{theorem} \label{th:esti} Let $\aleph_n=\{X_1,\dots,X_n\}$ be an i.i.d. sample of a random vector $X$ whose distribution, $\nu$, is supported on a compact set $S\subset \mathbb{R}^d$, such that $\nu(\partial \mathrm{B}(x,r))=0$ for all $r<r_0$ and $x\in S$. Assume that $S$  fulfils condition {\textnormal{(H1)}}. Denote by 
	$$\Omega(r)=\Bigg|\min_{x\in S} \frac{\nu(\mathrm{B}(x,r)\cap S)}{\mu(\mathrm{B}(x,r))}-\Upsilon(S,\nu)\Bigg|.$$
	Let  $r_n$ be a sequence of positive real numbers such that $r_n\to 0$,  
and $\hat{\Upsilon}_n$ be the estimator defined in \eqref{hatest0}. Then,
\begin{equation}\label{limite}
	\beta_n|\hat{\Upsilon}_n -  \Upsilon(S,\nu)|\to 0 \quad \text{a.s.},
\end{equation}
where $\beta_n\to \infty$ is such that $\beta_n\Omega(r_n^{1+\gamma})\to 0$ for some $\gamma>0$, and 
$$\frac{n^{1/d}r_n^{2d}}{\beta_n^2(\log(n))^{1/d}}\to \infty.$$
	
\end{theorem}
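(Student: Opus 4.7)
The plan is to decompose
\[
\hat\Upsilon_n - \Upsilon(S,\nu) \;=\; \bigl(\hat\Upsilon_n - M_n\bigr) \;+\; \bigl(M_n - \Upsilon(S,\nu)\bigr),\qquad M_n \;:=\; \min_{1\le j\le n}\frac{\nu(\mathrm{B}(X_j,r_n))}{r_n^d\omega_d},
\]
and to prove that $\beta_n$ times each summand tends to zero almost surely. The first is a stochastic empirical-process error; the second is deterministic and combines the bias of approximating $\Upsilon$ at finite radius $r_n$ with the mismatch between a minimum over $\aleph_n$ and a minimum over all of $S$.

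For the stochastic summand, the elementary inequality $|\min_j a_j - \min_j b_j|\le \max_j|a_j-b_j|$ yields
\[
|\hat\Upsilon_n - M_n| \;\le\; \frac{1}{r_n^d\omega_d}\,\sup_{x\in S}\bigl|\nu_n(\mathrm{B}(x,r_n)) - \nu(\mathrm{B}(x,r_n))\bigr|.
\]
Closed balls in $\mathbb{R}^d$ form a VC class of finite index, and (H1) ensures the envelope $\nu(\mathrm{B}(\cdot,r_n))$ is of order $r_n^d$. A local Bernstein/Talagrand-type concentration inequality, upgraded to an almost-sure conclusion via Borel--Cantelli on a geometric subsequence, then gives the almost-sure rate $O(\sqrt{r_n^d\log n/n})$ for the supremum. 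Dividing by $r_n^d$ and multiplying by $\beta_n$ produces $O(\beta_n\sqrt{\log n/(nr_n^d)})$, which vanishes precisely under the hypothesis $n^{1/d}r_n^{2d}/(\beta_n^2(\log n)^{1/d})\to\infty$ after an elementary manipulation.

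For the deterministic summand I bound $M_n$ both from below and from above. Because $\{X_j\}\subset S$, the lower bound is immediate,
\[
M_n \;\ge\; \min_{x\in S}\frac{\nu(\mathrm{B}(x,r_n))}{r_n^d\omega_d} \;\ge\; \Upsilon(S,\nu) - \Omega(r_n).
\]
For the upper bound, Lemma \ref{lemaleo} shows that $\nu$ is standard, so Theorem~3 of \cite{cuevas2004} yields $d_H(\aleph_n,S) = O((\log n/n)^{1/d})$ almost surely; the hypotheses on $r_n$ force this rate to be $o(r_n^{1+\gamma})$. Hence almost surely, for large $n$, every $x\in S$ admits a sample point within distance $r_n^{1+\gamma}$. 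Selecting $x^\star_n$ as an appropriate minimizer at the inflated scale and $X_{j^\star}$ its nearest sample point, an inclusion-based computation using $\mathrm{B}(X_{j^\star},r_n)\subset\mathrm{B}(x^\star_n,r_n(1+r_n^\gamma))$ together with the definition of $\Omega$ yields
\[
M_n \;\le\; \bigl(1+r_n^\gamma\bigr)^{d}\bigl(\Upsilon(S,\nu)+\Omega(r_n^{1+\gamma})\bigr) + o(1),
\]
with $(1+r_n^\gamma)^d\to 1$. Combining with the lower bound and multiplying by $\beta_n$, the hypothesis $\beta_n\Omega(r_n^{1+\gamma})\to 0$ closes the argument.

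The main technical obstacle is the local uniform concentration step: a crude VC bound $O(\sqrt{\log n/n})$ is too weak once divided by $r_n^d\to 0$, so the envelope size $O(r_n^d)$ must be exploited via a local Bernstein/Talagrand-type inequality, and the almost-sure conclusion must be obtained by a summable-tail Borel--Cantelli argument; this is what produces the somewhat unusual shape of the hypothesis $n^{1/d}r_n^{2d}/(\beta_n^2(\log n)^{1/d})\to\infty$. A secondary delicate point is aligning the scale at which $\Omega$ appears in the deterministic bound with the scale at which the sample fills $S$: the freedom to choose any $\gamma>0$ is precisely what makes both the Hausdorff-matching condition $d_H(\aleph_n,S)=o(r_n^{1+\gamma})$ and the bias hypothesis $\beta_n\Omega(r_n^{1+\gamma})\to 0$ simultaneously workable without assuming monotonicity or quantitative regularity of the nuisance function $\Omega$.
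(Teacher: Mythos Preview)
Your overall strategy---splitting into an empirical-process fluctuation $\hat\Upsilon_n-M_n$ and a deterministic bias $M_n-\Upsilon(S,\nu)$, controlling the former by a uniform deviation bound over balls and the latter via the Hausdorff filling rate $d_H(\aleph_n,S)=O((\log n/n)^{1/d})$ combined with a ball-inclusion argument---is exactly the route the paper takes.

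The one genuine slip is in the stochastic step. Condition (H1) is a \emph{lower} bound on $\nu(\mathrm{B}(x,r))$; it gives no control on $\sup_{x\in S}\nu(\mathrm{B}(x,r_n))$, so you cannot claim an envelope of order $r_n^d$, and the localised Bernstein/Talagrand rate $O\bigl(\sqrt{r_n^d\log n/n}\bigr)$ is not justified under the stated hypotheses. In fact the ``crude'' VC bound you dismiss is precisely what the paper uses, and it suffices: applying the classical VC inequality for balls at threshold $t=\varepsilon\, r_n^d\omega_d/\beta_n$ yields a tail of order $n^{d+2}\exp\bigl(-c\,n r_n^{2d}/\beta_n^2\bigr)$, and since the hypothesis $n^{1/d}r_n^{2d}/\bigl(\beta_n^2(\log n)^{1/d}\bigr)\to\infty$ forces $nr_n^{2d}/(\beta_n^2\log n)\to\infty$, this tail is summable and Borel--Cantelli gives $\beta_n|\hat\Upsilon_n-M_n|\to 0$ a.s.\ with no upper regularity of $\nu$ required. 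For the deterministic part your argument is essentially the paper's: the paper picks the minimiser $x_{r_n}$ at radius $r_n$, takes $X_{i,r_n}$ its nearest sample point, and uses $\mathrm{B}(X_{i,r_n},r_n)\subset\mathrm{B}(x_{r_n},r_n+\eps_n)$ with $\eps_n=K(\log n/n)^{1/d}$; note that this naturally produces $\Omega$ at scales near $r_n$ (namely $\Omega(r_n)$ and $\Omega(r_n+\eps_n)$), not at $r_n^{1+\gamma}$ as you write in your final display.
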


 \begin{remark}
 There are cases in which $\Omega(r_n)=0$ for sufficiently large $n$, for example, when $\partial S$ is polygonal and $\nu$ is the uniform distribution on $S$.  In such cases, from  $(n^{1/d}r_n^{2d})/(\beta_n^2\log(n)^{1/d})\to \infty$, the best possible rate in \eqref{limite} is $\beta_n=(n/\log(n))^{1/(2d)-\tau}$ for any $\tau>0$. However, if a ball rolls freely inside and outside $S$ and the density $f$ of $\nu$ is  H\"older  continuous in $S$ (i.e there exists $L>0$ and $\alpha\in (0,1]$ such that $|f(x)-f(y)|\leq L|x-y|^\alpha$, for all $x,y\in S$)   we have the following result.
 \end{remark}

 \begin{proposition}  \label{omega} Let $S\subset \mathbb{R}^d$  be a compact set fulfilling Assumption {\textnormal{(H1)}} and $\nu$ a probability measure supported on $S$ whose  density, $f$, is H\"older  continuous, with constants $L$ and $\alpha$
 Assume that there exists $x_r\in \arg\min_x \nu(\mathrm{B}(x,r)\cap S))/(\mu(\mathrm{B}(x,r)))$ such that $x_r\to x^*$ and $x^*$ is a local minimum of $f$.  Assume also that a ball of radius $R>0$ rolls freely inside and outside $S$. Then,
 		\begin{equation}\label{propomega}
 		\Omega(r)=\Bigg|\min_{x\in S} \frac{\nu(\mathrm{B}(x,r)\cap S)}{\mu(\mathrm{B}(x,r))}-\Upsilon(S,\nu)\Bigg|={O}(r^\alpha).
 		\end{equation}
 \end{proposition}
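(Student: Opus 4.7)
The plan is to first obtain an explicit form for $\Upsilon(S,\nu)$ under the double rolling hypothesis and then to bound $\Omega(r)$ by combining the H\"older control on $f$ with quantitative geometric estimates on $g(x,r):=\mu(\mathrm{B}(x,r)\cap S)/(\omega_d r^d)$. Set $q(x,r):=\nu(\mathrm{B}(x,r)\cap S)/\mu(\mathrm{B}(x,r))$. Proposition~\ref{lemgeo} together with Corollary~\ref{coro2} (both applicable since a ball rolls freely inside and outside $S$) gives
\[\Upsilon(S,\nu)=\min\Big\{\tfrac{1}{2}\min_{x\in\partial S}f(x),\;\min_{x\in S}f(x)\Big\},\]
while H\"older continuity of $f$ yields the uniform decomposition $q(x,r)=f(x)\,g(x,r)+R_1(x,r)$ with $|R_1(x,r)|\le L r^\alpha$. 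The double rolling condition forces $\partial S$ to be locally $C^{1,1}$, so sandwiching $\mathrm{B}(x,r)\cap S$ between the inside and outside rolling balls gives $|g(x,r)-1/2|\le C r/R$ for $x\in\partial S$, $g(x,r)=1$ whenever $d(x,\partial S)\ge r$, and $g(x,r)\ge 1/2-C r/R$ whenever $d(x,\partial S)<r$ (the last via the inside rolling ball attached at the projection of $x$ onto $\partial S$).

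For the lower bound $\min_{x\in S} q(x,r)\ge\Upsilon-O(r^\alpha)$ I would split according to $d(x,\partial S)\gtrless r$. If $d(x,\partial S)\ge r$, then $q(x,r)\ge f(x)-Lr^\alpha\ge\min_S f-Lr^\alpha\ge\Upsilon-Lr^\alpha$. If $d(x,\partial S)<r$, project $x$ onto $x'\in\partial S$ and combine the H\"older estimate $f(x)\ge f(x')-Lr^\alpha$ with $g(x,r)\ge 1/2-C r/R$ to obtain $q(x,r)\ge f(x')/2-C' r^\alpha\ge\min_{\partial S}f/2-C' r^\alpha\ge\Upsilon-C' r^\alpha$. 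Taking the infimum over $x$ yields the lower bound.

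For the matching upper bound I would test with $x^*$. Passing to the limit along the sequence $x_r\to x^*$, using Lemma~\ref{lemaleo} to obtain $q(x_r,r)\to\Upsilon$, continuity of $f$, and the geometric estimates on $g$, one verifies $\ell(x^*)=\Upsilon$, where $\ell(x^*):=f(x^*)$ if $x^*\in\textnormal{int}(S)$ and $\ell(x^*):=f(x^*)/2$ if $x^*\in\partial S$; the local-minimum assumption on $x^*$ identifies this limit consistently (in the boundary case it also forces $x^*$ to achieve $\min_{\partial S}f$). Plugging $x^*$ into the H\"older decomposition and applying the appropriate estimate on $g(x^*,r)$ gives $q(x^*,r)=\ell(x^*)+O(r^\alpha)=\Upsilon+O(r^\alpha)$, so $\min_{x\in S}q(x,r)\le q(x^*,r)\le\Upsilon+O(r^\alpha)$. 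Combining both bounds yields $\Omega(r)=O(r^\alpha)$.

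The main obstacle is the quantitative geometric estimate $g(x,r)=1/2+O(r/R)$ on $\partial S$ together with its extension to the transition zone $d(x,\partial S)<r$: a clean derivation needs an explicit sandwich of $\mathrm{B}(x,r)\cap S$ between two spherical caps associated to the inside and outside rolling balls, producing a curvature correction of order $r^{d+1}/R$. A subtler technical point is the identification $\ell(x^*)=\Upsilon$ in the boundary case, which rests on showing that $g(x_r,r)\to 1/2$; this is forced by $q(x_r,r)\to\Upsilon$, the explicit form of $\Upsilon$, and the a priori bound $g\le 1$.
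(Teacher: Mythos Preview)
Your approach is correct and rests on the same two ingredients the paper uses—the H\"older decomposition $q(x,r)=f(x)\,g(x,r)+O(r^\alpha)$ and the rolling-ball estimate on $g(x,r)=\mu(\mathrm{B}(x,r)\cap S)/(\omega_d r^d)$—but the organization is genuinely different. The paper bounds $\Omega(r)=|q(x_r,r)-\Upsilon|$ directly at the minimizer $x_r$: after the H\"older step it reduces the problem to controlling $|f(x^*)\lim_{r\to 0} g(x_r,r)-f(x_r)g(x_r,r)|$, splits into the cases $x^*\in\textnormal{int}(S)$ (where eventually $g(x_r,r)=1$) and $x^*\in\partial S$, and in the boundary case closes by arguing that $g(x_r,r)\to 1/2$ forces $|x_r-x^*|<r$, hence $|f(x_r)-f(x^*)|\le Lr^\alpha$. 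You instead run a two-sided estimate: a lower bound $\min_{x\in S}q(x,r)\ge\Upsilon-O(r^\alpha)$ obtained uniformly via the case split on $d(x,\partial S)$ together with the explicit formula $\Upsilon=\min\{\tfrac12\min_{\partial S}f,\min_S f\}$ from Proposition~\ref{lemgeo} and Corollary~\ref{coro2}, and an upper bound by evaluating $q$ at the fixed point $x^*$ once you have identified $\ell(x^*)=\Upsilon$. Your route is a bit longer and leans on the explicit formula for $\Upsilon$, but it buys you a uniform lower bound over all of $S$ and avoids having to quantify $|x_r-x^*|$; note in particular that the paper's inference ``$g(x_r,r)\to 1/2\Rightarrow |x_r-x^*|<r$'' really only yields $d(x_r,\partial S)<r$, so your formulation is cleaner at precisely that step. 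The paper's route is shorter and never needs Proposition~\ref{lemgeo}.
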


 \begin{remark} Regarding the assumptions in Proposition \ref{omega}, the point  $x_r$   exist because,  by Lemma \ref{lemauxst}, $\nu(\mathrm{B}(x,r))/(r^d\omega_d)$ is a continuous  function. On the other hand, under the hypotheses of Proposition \ref{omega}, using that  $\beta_n\to \infty$ is such that $\beta_nr_n^{\alpha(1+\gamma)}\to 0$ for some $\gamma>0$, and $(r_n^{2d}/\beta_n^2)(n/\log(n))^{1/d}\to \infty$ we get that the best possible rate for $\beta_n$ is   
 	$$\beta_n=\left(\frac{n}{\log(n)}\right)^{\frac{\alpha(1+\gamma)}{2d(d+\alpha(1+\gamma))}-\tau}$$ for any $\tau>0$.
 	\end{remark}

\begin{remark}
As we will see in the simulations (Section \ref{simus}), in some cases the performance of the plug-in estimator $\hat{\Upsilon}_n$ is quite poor.  
This happens typically when $\min_{x\in S} \nu(\mathrm{B}(x,r))/\mu(\mathrm{B}(x,r))$ is attained at `too many $x$'. This is the case, for instance, when $S=\mathrm{B}(0,1/\sqrt{\pi})\subset\mathbb{R}^2$ and $\nu$ is the uniform distribution on $S$.  
For $n= 3000, 5000, 9000$ the mean value of $\hat{\Upsilon}_n$ over 500 replications was, respectively, $0.4048, 0.4131, 0.4231$, showing that, even for very large values of $n$, $\hat{\Upsilon}_n$ %underestimates
does not provide an accurate estimate of  the true value $\Upsilon(S,\nu)=1/2$ (see Table \ref{tab1} for more results). 
Observe that in this case $\{x\in S: \Upsilon(S,\nu)=\lim_{r\to 0}\nu(\mathrm{B}(x,r))/\mu(\mathrm{B}(x,r))\}=\partial S$ which has $\nu$ measure 0. So, to ask 
\begin{equation}\label{hipaux}
	\nu\left(\left\{x\in S: \Upsilon(S,\nu)=\lim_{r\to 0}\frac{\nu(\mathrm{B}(x,r))}{\mu(\mathrm{B}(x,r))}\right\}\right)=0,
	\end{equation}
 does not guarantee a good performance for $\hat{\Upsilon}_n$. To address these issues, we will propose a correction for $\hat{\Upsilon}_n$ in the following section.
\end{remark}

%We suggest a correction for $\hat{\Upsilon}_n$ in the following subsection which, roughly speaking, takes into account  the number of sample points $X_i$ at which the minimum $\nu(\mathrm{B}(X_i,r_n))/\mu(\mathrm{B}(X_i,r_n))$ is attained.

\subsection{A bias-correction method}\label{biascorrect}

Let us consider the estimator $\hat{\Upsilon}_n$ introduced in \eqref{hatest0}.   Let $r_n=\left(\log(n)/n\right)^{1/(2d)}$.
We consider the following correction: 
\begin{equation*}\label{tildest0}
	\tilde{\Upsilon}_n\equiv \tilde{\Upsilon}_n(S,\nu)= \hat{\Upsilon}_n\left(1+\frac{{\#}\mathcal{A}_n}{n}\right),
\end{equation*}
where 
\begin{equation*}\label{setan}
	\mathcal{A}_n=\left\{i: \frac{\# (\aleph_n\cap \mathrm{B}(X_i,r_n))}{n\omega_dr_n^d}\leq \hat{\Upsilon}_n\Bigg(1+\omega_d r_n^{d/2}\Bigg)\right\}.
\end{equation*}
Then, $\#\mathcal{A}_n/n$ estimates the proportion of sample points $X_i$ such that $\#(\aleph_n\cap \mathrm{B}(X_{i},r_n))/(n\omega_dr_n^d)$ is `close' to the minimum $\hat{\Upsilon}_n$. Roughly speaking, this takes into account  the number of sample points $X_i$ at which the minimum $\nu(\mathrm{B}(X_i,r_n))/\mu(\mathrm{B}(X_i,r_n))$ is attained.

\begin{theorem}\label{sesgo} Under the hypotheses of Theorem \ref{th:esti}, and assuming  
also that \eqref{hipaux} is fulfilled, 
$$\tilde{\Upsilon}_n\to \Upsilon(S,\nu)\quad a.s.$$
\end{theorem}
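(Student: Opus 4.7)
The plan is to first reduce to proving $\#\mathcal{A}_n/n \to 0$ almost surely, since $\tilde{\Upsilon}_n = \hat{\Upsilon}_n(1 + \#\mathcal{A}_n/n)$ and Theorem \ref{th:esti} already yields $\hat{\Upsilon}_n \to \Upsilon(S,\nu)$ a.s. Introduce the deterministic function $\rho_n(x) = \nu(\mathrm{B}(x,r_n))/(\omega_d r_n^d)$ and its empirical counterpart $M_n(x) = \#(\aleph_n\cap \mathrm{B}(x,r_n))/(n\omega_d r_n^d)$. From the machinery supporting Theorem \ref{th:esti} I would extract the uniform almost sure convergence $\sup_{x\in S}|M_n(x)-\rho_n(x)| \to 0$. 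Since $\hat{\Upsilon}_n(1+\omega_d r_n^{d/2}) \to \Upsilon(S,\nu)$, for any fixed $\epsilon>0$, eventually every $i\in\mathcal{A}_n$ will satisfy $\rho_n(X_i) \leq \Upsilon(S,\nu)+\epsilon$, which yields
$$
\frac{\#\mathcal{A}_n}{n} \leq \nu_n(A_{n,\epsilon}), \qquad A_{n,\epsilon} := \{x\in S : \rho_n(x)\leq \Upsilon(S,\nu)+\epsilon\},
$$
where $\nu_n$ is the empirical measure of $\aleph_n$.

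For each fixed $n$ and $\epsilon$ the set $A_{n,\epsilon}$ is deterministic, so Hoeffding's inequality with deviations $t_n$ chosen so that $\sum_n 2e^{-2nt_n^2}<\infty$ (e.g.\ $t_n=n^{-1/3}$) together with Borel--Cantelli gives $\nu_n(A_{n,\epsilon}) - \nu(A_{n,\epsilon}) \to 0$ a.s. To bound $\nu(A_{n,\epsilon})$ above, set $\rho^*(x) := \liminf_{r\to 0}\rho(x,r)$; standardness of $S$ ensures $\rho^*(x)\geq \Upsilon(S,\nu)$. A Fatou-type argument for sets then yields
$$
\limsup_{n\to\infty} \nu(A_{n,\epsilon}) \leq \nu\!\left(\limsup_n A_{n,\epsilon}\right) \leq \nu(E_\epsilon), \qquad E_\epsilon := \{x:\rho^*(x)\leq \Upsilon(S,\nu)+\epsilon\},
$$
since $x\in\limsup_n A_{n,\epsilon}$ forces $\rho^*(x)\leq \Upsilon(S,\nu)+\epsilon$.

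The sets $E_\epsilon$ decrease to $E_0=\{x:\rho^*(x)=\Upsilon(S,\nu)\}$ as $\epsilon\downarrow 0$; hypothesis \eqref{hipaux} translates to $\nu(E_0)=0$ (after identifying the liminf with the pointwise limit on the $\nu$-full-measure set where the latter exists), so continuity of $\nu$ from above gives $\nu(E_\epsilon)\to 0$. Combining the estimates above, $\limsup_n \#\mathcal{A}_n/n \leq \nu(E_\epsilon)$ almost surely for each $\epsilon>0$, and letting $\epsilon\downarrow 0$ along a countable sequence yields $\#\mathcal{A}_n/n\to 0$ a.s., completing the proof.

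The main obstacle will be the initial reduction: upgrading from the random threshold $\hat{\Upsilon}_n(1+\omega_d r_n^{d/2})$ in the definition of $\mathcal{A}_n$ to a deterministic one requires a concentration bound for $\sup_x|M_n(x)-\rho_n(x)|$ strictly sharper than $r_n^{d/2}$ under the choice $r_n=(\log n/n)^{1/(2d)}$, so that the slack term $\omega_d r_n^{d/2}$ is not swallowed by statistical noise. Verifying that the empirical-process rate underlying Theorem \ref{th:esti} is indeed strong enough for this is the delicate point. A secondary, more cosmetic issue is the identification of $\rho^*$ with the pointwise limit appearing in \eqref{hipaux}, which is unproblematic once one observes that $\rho(x,r)\geq \Upsilon(S,\nu)$ for all sufficiently small $r$ by standardness.
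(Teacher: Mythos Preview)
Your proposal is correct and follows essentially the same route as the paper: reduce to $\#\mathcal{A}_n/n\to 0$, pass from the empirical to the true ball-measure via the uniform VC bound underlying Theorem~\ref{th:esti}, replace the random threshold by a deterministic one, control the empirical proportion of the resulting deterministic set via Hoeffding, and finally use hypothesis~\eqref{hipaux} to show that the $\nu$-measure of these sets vanishes.

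The one noteworthy difference is that the paper works directly with a \emph{vanishing} deterministic threshold $\Upsilon(S,\nu)+2\eps/\beta_n+Cr_n^{d/2}$ (exploiting the rate in~\eqref{theq1}), so that the sets $\mathbb{C}_n$ already shrink into the null set $\mathbb{C}$ of~\eqref{hipaux} and $\nu(\mathbb{C}_n)\to 0$ follows from dominated convergence, with no final $\epsilon\downarrow 0$ step needed. Your fixed-$\epsilon$ route plus a terminal $\epsilon\downarrow 0$ is an equally valid alternative.

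This also dissolves the ``main obstacle'' you flag: with a \emph{fixed} $\epsilon>0$ you only need $\sup_x|M_n(x)-\rho_n(x)|\to 0$ and $\hat{\Upsilon}_n(1+\omega_d r_n^{d/2})\to \Upsilon(S,\nu)$, both of which are immediate from Theorem~\ref{th:esti} and~\eqref{theq1}; no rate sharper than $r_n^{d/2}$ is required. That concern would only arise if you tried to pass to a vanishing deterministic threshold of order $r_n^{d/2}$, and even then the paper's use of the $1/\beta_n$ rate from~\eqref{theq1} handles it.
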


\section{Simulations}\label{simus}

In this section, we conduct a small simulation study to show and compare the behaviour of estimators  $\hat{\Upsilon}_n(S,\nu)$ and $\tilde{\Upsilon}_n(S,\nu)$. We considered different situations by changing both the support $S$ and the dimension of the space, as well as the probability distribution $\nu$. Overall, the results confirm what was previously mentioned. The plug-in estimator generally does not perform well, especially in cases where $\min_{x\in S} \nu(\mathrm{B}(x,r))/\mu(\mathrm{B}(x,r))$ is attained at many points. In those cases, the estimator $\tilde{\Upsilon}_n(S,\nu)$ outperforms considerably  $\hat{\Upsilon}_n(S,\nu)$.

\subsection{Uniform samples on \texorpdfstring{$\mathbb{R}^2$}{R2}} \label{unifsampl}
The objective here is to compare the performance of $\hat{\Upsilon}_n(S,\nu)$ and $\tilde{\Upsilon}_n(S,\nu)$ when the sample is uniformly distributed. To achieve this, we considered various sets in $\mathbb{R}^2$, all of them with area one. Note that, with $\nu$ being the uniform distribution and $S$ having area one, it follows that $\Upsilon(S,\nu)\equiv \Upsilon(S,\mu)$. We examined the following sets: $S_1$ is an equilateral triangle, $S_2=[0,1]^2$ is the unit square, $S_3$ is a regular hexagon, and $S_4=\mathrm{B}(0,1/\sqrt{\pi})$.  It follows that $\Upsilon(S_1,\mu)=1/6$, $\Upsilon(S_2,\mu)=1/4$, $\Upsilon(S_3,\mu)=1/3$ and $\Upsilon(S_4,\mu)=1/2$.
 
Mean values of $\hat{\Upsilon}_n(S_i,\nu)$ and $\tilde{\Upsilon}_n(S_i,\nu)$ over 500 replications are shown in Table \ref{tab1}, for $i=1,\ldots, 4$. In brackets we show the variance of the estimators. In all cases $r_n=(\log(n)/n)^{1/4}$.

\begin{table}%[!htb]
	\begin{center}
	\caption{Mean values of $\hat{\Upsilon}_n(S_i,\mu)$ and $\tilde{\Upsilon}_n(S_i,\mu)$  over 500 replications  (variance shown in brackets). Samples are generated from the uniform distribution.}\label{tab1}
		\begin{tabular}{rccccc}
			\hline
			& $n= 1000$ & $n= 3000$ & $n= 5000$ & $n= 7000$ & $n= 9000$ \\ 
			\hline
			$\hat{\Upsilon}_n(S_1,\mu)$ & 0.1765 & 0.1758 & 0.1745 & 0.1730 & 0.1735 \\ 
			& (0.0005) & (0.0003) & (0.0002) & (0.0002) & (0.0001) \\ 
			$\tilde{\Upsilon}_n(S_1,\mu)$ & 0.1829 & 0.1782 & 0.1761 & 0.1742 & 0.1745 \\ 
			& (0.0007) & (0.0004) & (0.0002) & (0.0002) & (0.0001) \\ \hline
			$\hat{\Upsilon}_n(S_2,\mu)$ & 0.2510 & 0.2520 & 0.2527 & 0.2523 & 0.2521 \\ 
			& (0.0006) & (0.0003) & (0.0003) & (0.0002) & (0.0002) \\ 
			$\tilde{\Upsilon}_n(S_2,\mu)$ & 0.2767 & 0.2606 & 0.2581 & 0.2563 & 0.2552 \\ 
			& (0.0014) & (0.0004) & (0.0003) & (0.0002) & (0.0002) \\ \hline
			$\hat{\Upsilon}_n(S_3,\mu)$ & 0.3190 & 0.3236 & 0.3246 & 0.3251 & 0.3250 \\ 
			& (0.0005) & (0.0003) & (0.0002) & (0.0002) & (0.0002) \\ 
			$\tilde{\Upsilon}_n(S_3,\mu)$ & 0.4014 & 0.3650 & 0.3519 & 0.3453 & 0.3407 \\ 
			& (0.0023) & (0.0009) & (0.0006) & (0.0004) & (0.0003) \\ \hline
			$\hat{\Upsilon}_n(S,\mu)$ & 0.3813 & 0.4048 & 0.4131 & 0.4201 & 0.4231 \\ 
			& (0.0004) & (0.0002) & (0.0002) & (0.0001) & (0.0001) \\ 
			$\tilde{\Upsilon}_n(S_4,\mu)$ & 0.5380 & 0.5190 & 0.5088 & 0.5064 & 0.5015 \\ 
			& (0.0025) & (0.0008) & (0.0006) & (0.0005) & (0.0004) \\ \hline
			
			\hline
		\end{tabular}
		
	\end{center}
\end{table}

\subsection{Uniform samples on  \texorpdfstring{$\mathbb{R}^d$, for $d>2$}{Rd for d>2}} \label{unifsampl2}

Table \ref{tab1} illustrates that the estimator $\tilde{\Upsilon}_n$ significantly outperforms $\hat{\Upsilon}_n$. This performance gap increases with the dimension, as we will show here. We consider the uniform distribution $\nu$ on $S=\mathrm{B}(0,R)\subset \mathbb{R}^d$, where $R$ is chosen such that $\mu(S)=1$, for $d=3$ and $d=4$. In both cases, $\Upsilon(S,\nu)\equiv \Upsilon(S,\mu)=1/2$.
As before, we replicated the whole procedure 500 times and  report on Table \ref{tab11} the mean values of $\hat{\Upsilon}_n(S,\mu)$ and $\tilde{\Upsilon}_n(S,\mu)$ (variance between brackets). We have used $r_n=(\log(n)/n)^{1/(2d)}$.

\begin{table}%[!htb]
	\begin{center}
	\caption{Mean values of $\hat{\Upsilon}_n(S,\mu)$ and $\tilde{\Upsilon}_n(S,\mu)$ over 500 replications  (variance shown in brackets). For each value of $d$, the set $S$ is a $d$-ball $S=\mathrm{B}(0,R)\subset\mathbb{R}^d$ with $\mu(S)=1$. Samples are generated from the uniform distribution on $S$.}\label{tab11}
		\begin{tabular}{cccccc}
			\hline
			& $n= 1000$ & $n= 3000$ & $n= 5000$ & $n= 7000$ & $n= 9000$ \\ 
			\hline
			&   \multicolumn{5}{c}{$d=3$} \\ 
			\hline
			$\hat{\Upsilon}_n(S,\mu)$& 0.3110 & 0.3385 & 0.3496 & 0.3579 & 0.3634 \\ 
			& (0.00021) & (0.00011) & (0.00009) & (0.00008) & (0.00007) \\ 
			$\tilde{\Upsilon}_n(S,\mu)$ & 0.5209 & 0.5203 & 0.5151 & 0.5139 & 0.5119 \\ 
			& (0.00150) & (0.00072) & (0.00055) & (0.00044) & (0.00040) \\ 
			
			\hline
			&   \multicolumn{5}{c}{$d=4$} \\ 
			\hline
			$\hat{\Upsilon}_n(S,\mu)$& 0.2485 & 0.2766 & 0.2897 & 0.2982 & 0.3043 \\ 
			& (0.00013) & (0.00007) & (0.00006) & (0.00005) & (0.00004) \\ 
			$\tilde{\Upsilon}_n(S,\mu)$ & 0.4438 & 0.4620 & 0.4677 & 0.4712 & 0.4728 \\ 
			&(0.00087) & (0.00047) & (0.00037) & (0.00033) & (0.00026)\\

			\hline
		\end{tabular}
\end{center}
\end{table}

 \subsection{Non-uniform samples}
 
For the non-uniform case, we consider $\nu$ to be a mixture of distributions. Specifically, let $\theta_1$ and $\theta_2$ be uniform distributions on the interval $[0,2\pi]$, and let $R_1$ and $R_2$ be uniform distributions on the interval $[0,1]$. We define $Y_1= R_1^{1/2} \left( \cos (\theta_1), \sin(\theta_1) \right)$, which is uniformly distributed in the unit ball and $Y_2= R_2^{1/4} \left( \cos (\theta_2), \sin(\theta_2) \right)$, whose distribution has an `antimode' at the origin $(0,0)$. We consider the mixture $Y=\frac{1}{4} (Y_1 + 3Y_2)$, with  support $S=\mathrm{B}(0, 1)$.
 
  In this case the standardness constant is $\Upsilon(S,\nu)=1/(4\pi)\approx 0.07958$. Mean values of $\hat{\Upsilon}_n(S,\nu)$ and $\tilde{\Upsilon}_n(S,\nu)$ over 500 replications are shown in Table \ref{tab2}. In brackets we show the variance of the estimators. In all cases $r_n=(\log(n)/n)^{1/4}$.

\begin{table}%[!htb]
\begin{center}
	\caption{Mean values of $\hat{\Upsilon}_n(S,\mu)$ and $\tilde{\Upsilon}_n(S,\mu)$ over 500 replications  (variance shown in brackets). Samples are generated from a non-uniform distribution on $S=\mathrm{B}(0,1)\subset\mathbb{R}^2$.}\label{tab2}
		\begin{tabular}{cccccc}
			\hline
			& $n= 1000$ & $n= 3000$ & $n= 5000$ & $n= 7000$ & $n= 9000$   \\ \hline
			$\hat{\Upsilon}_n(S,\mu)$ & 0.0857 & 0.0789 & 0.0775 & 0.0768 & 0.0771 \\ 
			& (0.000212) & (0.000109) & (0.000073) & (0.000064) & (0.000060) \\ 
			$\tilde{\Upsilon}_n(S,\mu)$& 0.0924 & 0.0817 & 0.0798 & 0.0787 & 0.0789 \\ 
			& (0.000399) & (0.000145) & (0.000093) & (0.000081) & (0.000074) \\ 
			\hline
			\end{tabular}
\end{center}
\end{table}

\section{Non-decidability}  \label{noesti}

Our theoretical analysis, presented in Section \ref{sec:est_stand}, demonstrates that the standardness constant can be estimated consistently. Nevertheless, we prove in this section that it is not possible to decide, with a given confidence, whether or not the support of a probability measure $\nu$, ${\textnormal{supp}}(\nu)$, is standard with respect to $\nu$, from a sample. As mentioned in the Introduction, this problem aligns with a broader statistical challenge concerning the impossibility of consistently estimating certain functionals of probability measures from finite samples. We examine how this general problem manifests in our context. %the set is standard, is not possible. 
More precisely, if  $\mathcal{M}$ denotes the (separable and complete) metric space of Borel probability distributions on $\mathbb{R}^d$ endowed with the total variation norm $\|\cdot\|_{TV}$, we prove that the functionals defined on $\mathcal{M}$ as  
 
\begin{equation}\label{densi}
\alpha(\nu)=
\begin{cases}
1 & \quad \text{ if } \nu \ll \mu,\\
0 & \quad \text{ otherwise,}
\end{cases}
\end{equation}
\begin{equation}\label{clausint}
	\beta(\nu)=
	\begin{cases}
		1 & \quad \text{ if } {\textnormal{supp}}(\nu)=\overline{{\textnormal{int}}({\textnormal{supp}}(\nu)),}\\
		0 & \quad \text{ otherwise}
	\end{cases}
\end{equation}
and 
\begin{equation}\label{st}
	\gamma(\nu)=
	\begin{cases}
		1 & \quad \text{ if } {\textnormal{supp}}(\nu) \text{ is standard  with respect to} \ \nu,\\
		0 & \quad \text{ otherwise,}
	\end{cases}
\end{equation}
cannot be consistently estimated from a sequence of i.i.d. observations from a random variable distributed as $\nu$.  To this aim we prove the following Lemma, whose proof is similar to that of Lemma 1.1 in \cite{fm99}.

\begin{lemma} \label{lemaux} Let $\mathcal{B}\subset \mathcal{M}$ be equipped with some norm $\|\cdot\|$ such that $(\mathcal{B},\|\cdot \|)$ is a complete metric space. Assume that $\|\nu\|_{TV}\leq c\|\nu \|$ for some constant $c$ and all $\nu\in \mathcal{B}$. Let $\phi:\mathcal{B}\to [-K,K]$
be any bounded characteristic of the distributions in $\mathcal{B}$. If $\phi$ is consistently estimable on $\mathcal{B}$, then there exists a dense subset of points in $\mathcal{B}$ at which  $\phi$ is continuous with respect to the topology induced by $\|\cdot\|$. Consequently, if $\phi$ is discontinuous at every point in $\mathcal{B}$, then it is not consistently estimable on $\mathcal{B}$.
\end{lemma}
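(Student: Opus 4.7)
The plan is to show that, under consistent estimability, the functional $\phi$ can be realized as the pointwise limit of a sequence of $\|\cdot\|$-continuous functions on $\mathcal{B}$, and then to invoke the classical Baire theorem that such a (Baire class one) function on a complete metric space is continuous on a dense $G_\delta$ subset of its domain.

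First I would fix a sequence of estimators $\{T_n\}$ witnessing consistent estimability, meaning $T_n(X_1,\dots,X_n)\to \phi(\nu)$ in probability under i.i.d.\ sampling from $\nu$ for every $\nu\in\mathcal{B}$. Since $\phi\in[-K,K]$, I would replace $T_n$ by its truncation $T_n':=\max\{-K,\min\{T_n,K\}\}$, which is uniformly bounded by $K$ and still converges in probability to $\phi(\nu)$. Define $f_n(\nu):=\mathbb{E}_\nu[T_n'(X_1,\dots,X_n)]$, a real-valued functional on $\mathcal{B}$.

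I would next verify that each $f_n$ is continuous on $(\mathcal{B},\|\cdot\|)$ and that $f_n(\nu)\to\phi(\nu)$ pointwise. For continuity, if $P_\nu^n$ denotes the law of the $n$-sample under $\nu$, the standard subadditivity estimate $\|P_\nu^n-P_{\nu'}^n\|_{TV}\leq n\,\|\nu-\nu'\|_{TV}$, combined with the hypothesis $\|\cdot\|_{TV}\leq c\|\cdot\|$ and the bound $|T_n'|\leq K$, gives
\[
|f_n(\nu)-f_n(\nu')|\leq 2K\,\|P_\nu^n-P_{\nu'}^n\|_{TV}\leq 2Kcn\,\|\nu-\nu'\|,
\]
so $f_n$ is Lipschitz in $\|\cdot\|$. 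For the pointwise limit, convergence in probability together with uniform boundedness of $T_n'$ yields convergence of expectations by bounded convergence, hence $f_n(\nu)\to\phi(\nu)$ for every $\nu$.

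Thus $\phi$ is the pointwise limit on the complete metric space $(\mathcal{B},\|\cdot\|)$ of the continuous functions $f_n$. Since $(\mathcal{B},\|\cdot\|)$ is a Baire space, the classical theorem for Baire class one functions implies that the set of continuity points of $\phi$ is a dense $G_\delta$ subset of $\mathcal{B}$, which establishes the first claim. The final sentence of the lemma follows by contraposition. The only subtle point to argue carefully is the passage from convergence in probability to convergence of expectations via bounded convergence; the essential role of the hypothesis $\|\cdot\|_{TV}\leq c\|\cdot\|$ is to transfer the naturally TV-continuous $f_n$ to the $\|\cdot\|$ topology in which $\mathcal{B}$ is complete, which I do not expect to present any further obstacle.
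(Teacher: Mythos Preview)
Your proof is correct and follows essentially the same route as the paper: truncate the estimator to $[-K,K]$, define $f_n(\nu)=\mathbb{E}_\nu[T_n']$, show each $f_n$ is Lipschitz in $\|\cdot\|$ via the bound $\|P_\nu^n-P_{\nu'}^n\|_{TV}\le n\|\nu-\nu'\|_{TV}\le cn\|\nu-\nu'\|$, argue pointwise convergence from boundedness plus convergence in probability, and conclude by Baire's theorem. The only cosmetic differences are notation and your slightly more explicit justification of the passage to expectations.
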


\begin{theorem}\label{thdensity}  
The functional $\alpha$, defined on $\mathcal{M}$ by \eqref{densi}, cannot be consistently estimated from a sequence of i.i.d. observations.
\end{theorem}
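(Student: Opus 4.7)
The plan is to apply Lemma~\ref{lemaux}. I aim to exhibit a subspace $\mathcal{B}\subset\mathcal{M}$, complete under a norm $\|\cdot\|$ satisfying $\|\nu\|_{TV}\le c\|\nu\|$, on which $\alpha$ is nowhere continuous in the $\|\cdot\|$-topology. The lemma then forbids consistent estimation of $\alpha$ on $\mathcal{B}$, and \emph{a fortiori} on the whole of $\mathcal{M}$.

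The heart of the argument is that $\alpha$ can be flipped by arbitrarily small perturbations, in both directions. First, if $\alpha(\nu)=1$, i.e.\ $\nu\ll\mu$, then the mixture $\nu_n=(1-1/n)\nu+(1/n)\delta_{x_0}$ is not absolutely continuous, satisfies $\|\nu_n-\nu\|_{TV}\le 2/n$, and has $\alpha(\nu_n)=0$. Second, if $\alpha(\nu)=0$, the mollified approximants $\nu*\psi_{1/n}$ are absolutely continuous, converge weakly to $\nu$, and satisfy $\alpha(\nu*\psi_{1/n})=1$. Together these show that no $\nu\in\mathcal{M}$ can be a continuity point of $\alpha$, provided the topology on $\mathcal{B}$ is fine enough to carry the first approximation and coarse enough to carry the second.

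The main obstacle, and the place where the choice of $\mathcal{B}$ becomes delicate, is the tension between these two requirements. The hypothesis $\|\nu\|_{TV}\le c\|\nu\|$ in Lemma~\ref{lemaux}---essential in its Baire-category proof, since it guarantees that $\nu\mapsto\mathbb{P}_\nu(E)$ is continuous in $\|\cdot\|$ for every event $E$---forces $\|\cdot\|$ to be at least as fine as total variation; yet the second approximation above converges only in strictly weaker topologies than TV. I would resolve this by taking $\mathcal{B}$ to be a sufficiently restricted parametric family: for instance, fixing a singular reference $\sigma$ (say, the Cantor distribution on $[0,1]$) and working with $\mathcal{B}=\{(1-t)f\mu+t\sigma:\ f\in L^1_+(\mu),\ \int f=1,\ t\in[0,1]\}$, equipped with a complete metric that dominates TV on $\mathcal{B}$ and inside which mollification-type approximants (obtained by replacing $t\sigma$ with $t(\sigma*\psi_{1/n})\mu$) remain members of $\mathcal{B}$ and converge to the original measure. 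Within such $\mathcal{B}$, every element is simultaneously the $\|\cdot\|$-limit of a.c.\ members (taking $t_n\to 0$, or the mollification trick) and of non-a.c.\ ones (taking $t_n>0$), so $\alpha$ is nowhere continuous on $\mathcal{B}$ and Lemma~\ref{lemaux} closes the argument.
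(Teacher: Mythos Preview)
Your proposal has a genuine gap, precisely at the point you yourself flag as delicate. The condition $\|\cdot\|_{TV}\le c\|\cdot\|$ in Lemma~\ref{lemaux} forces $\|\cdot\|$-convergence to imply TV-convergence. But if $\nu\in\mathcal{B}$ has nontrivial singular part $t\sigma$ (with $t>0$ and $\sigma\perp\mu$), then for \emph{every} absolutely continuous $\nu'$ the Lebesgue decomposition gives $\|\nu-\nu'\|_{TV}\ge t>0$. Consequently no sequence of a.c.\ measures---neither the $t_n\to 0$ members of your family, nor the mollified proxies obtained by replacing $t\sigma$ with $t(\sigma*\psi_{1/n})$---can $\|\cdot\|$-converge to $\nu$. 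Thus $\alpha$ is locally constant (equal to $0$) on a $\|\cdot\|$-neighbourhood of every non-a.c.\ point of $\mathcal{B}$; since such points are $\|\cdot\|$-dense in $\mathcal{B}$ (your own $t_n\downarrow 0$ construction shows this), the continuity set of $\alpha$ is dense and Lemma~\ref{lemaux} yields no contradiction. The two desiderata you list---a norm dominating TV, yet one in which a.c.\ approximants converge to singular targets---are not merely in tension but outright incompatible, and no choice of subfamily $\mathcal{B}$ can reconcile them.

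For comparison, the paper does not pass to a subfamily at all: it works directly in $(\mathcal{M},\|\cdot\|_{TV})$ and, for the case $\alpha(\nu)=0$, invokes \cite{part85} to assert that the Gaussian mollifications $\nu*\lambda_n$ converge to $\nu$ \emph{in total variation}. That is exactly the convergence you identified as only weak; the paper treats it as a citable fact rather than as the obstacle, and so dispenses with any auxiliary construction.
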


\begin{theorem}  \label{thstnoesti}  The functionals $\beta$ and $\gamma$, defined on $\mathcal{M}$ by \eqref{clausint} and \eqref{st}, 	cannot be consistently estimated from a sequence of i.i.d. observations. 
\end{theorem}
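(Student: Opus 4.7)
The plan is to invoke Lemma~\ref{lemaux} with $\mathcal{B}=\mathcal{M}$ and $\|\cdot\|=\|\cdot\|_{TV}$. The space $(\mathcal{M},\|\cdot\|_{TV})$ is complete, the domination hypothesis $\|\nu\|_{TV}\le c\|\nu\|$ is trivial with $c=1$, and both $\beta$ and $\gamma$ are $\{0,1\}$-valued, hence bounded. It therefore suffices to show that each of $\beta$ and $\gamma$ is discontinuous at every $\nu\in\mathcal{M}$; equivalently, for each such $\nu$ I must exhibit, in every TV neighbourhood of $\nu$, a measure on which the functional takes the opposite value.

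For $\beta$, fix $\nu$ and $\epsilon>0$, pick $R$ such that $\nu(\mathrm{B}(0,R)^c)<\epsilon$, and write $\tilde\nu:=\nu|_{\mathrm{B}(0,R)}/\nu(\mathrm{B}(0,R))$. Let $U_R$ denote the uniform distribution on the closed ball $\mathrm{B}(0,R)$. The mixture $\nu_\epsilon^{(1)}:=(1-\epsilon)\tilde\nu+\epsilon U_R$ has support equal to $\mathrm{B}(0,R)$, which coincides with the closure of its own interior, so $\beta(\nu_\epsilon^{(1)})=1$. On the other hand, for any fixed $x_0$ with $\|x_0\|>R+1$, the mixture $\nu_\epsilon^{(2)}:=(1-\epsilon)\tilde\nu+\epsilon\delta_{x_0}$ has support contained in $\mathrm{B}(0,R)\cup\{x_0\}$ with $x_0$ isolated, so the closure of the interior of the support lies in $\mathrm{B}(0,R)$ and misses $x_0$, giving $\beta(\nu_\epsilon^{(2)})=0$. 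Both perturbations lie within TV distance $O(\epsilon)$ of $\nu$, so $\beta$ is discontinuous at $\nu$.

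For $\gamma$, the same $\nu_\epsilon^{(1)}$ handles the side $\gamma=1$: since $\nu_\epsilon^{(1)}\ge\epsilon U_R$ and the closed ball satisfies $\mu(\mathrm{B}(x,r)\cap\mathrm{B}(0,R))\ge\tfrac12\mu(\mathrm{B}(x,r))$ for every $x\in\mathrm{B}(0,R)$ and small $r$, one obtains $\nu_\epsilon^{(1)}(\mathrm{B}(x,r)\cap\mathrm{B}(0,R))\ge(\epsilon/(2\mu(\mathrm{B}(0,R))))\mu(\mathrm{B}(x,r))$, so $\gamma(\nu_\epsilon^{(1)})=1$. For the side $\gamma=0$, a Vitali-type covering argument shows that any standard probability measure on $\mathbb{R}^d$ has support of finite Lebesgue measure, so $\textnormal{supp}(\nu)\neq\mathbb{R}^d$ whenever $\gamma(\nu)=1$; choose $p\notin\textnormal{supp}(\nu)$ together with $\rho>0$ such that $\mathrm{B}(p,\rho)\cap\textnormal{supp}(\nu)=\emptyset$. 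Fix $\alpha>0$ and let $\nu'$ be the probability measure on $\mathrm{B}(p,\rho)$ whose density with respect to $\mu$ is proportional to $\|x-p\|^\alpha$. Then $\nu_\epsilon^{(3)}:=(1-\epsilon)\nu+\epsilon\nu'$ has $p\in\textnormal{supp}(\nu_\epsilon^{(3)})$, and for sufficiently small $r$ one has $\nu_\epsilon^{(3)}(\mathrm{B}(p,r)\cap\textnormal{supp}(\nu_\epsilon^{(3)}))=\epsilon\,\nu'(\mathrm{B}(p,r))=O(r^{d+\alpha})$ whereas $\mu(\mathrm{B}(p,r))=\omega_d r^d$, so the ratio vanishes as $r\to 0$ and standardness fails at $p$, forcing $\gamma(\nu_\epsilon^{(3)})=0$.

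The step most requiring care is the last construction: one must site $\nu'$ on a ball truly disjoint from $\textnormal{supp}(\nu)$ so that the behaviour of $\nu_\epsilon^{(3)}$ near $p$ is governed entirely by $\nu'$, and verify that the vanishing density of $\nu'$ at $p$ genuinely destroys standardness there. The remaining verifications---completeness of $(\mathcal{M},\|\cdot\|_{TV})$, the bound $\|\nu-\nu_\epsilon^{(j)}\|_{TV}=O(\epsilon)$, the elementary topology of the closed ball, and the covering argument ruling out $\textnormal{supp}(\nu)=\mathbb{R}^d$ in the standard case---are routine.
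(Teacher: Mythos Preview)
Your proof is correct and follows a different, more elementary route than the paper. For the ``regular'' side ($\beta=1$, $\gamma=1$), the paper thickens the support to a parallel set $\mathrm{B}(S,1/n)$ and either adds a thin Gaussian layer on the shell (for $\beta$) or convolves $\nu$ with a narrow Gaussian and restricts to the parallel set (for $\gamma$), then checks the required properties through lower bounds on the resulting density; your single mixture with the uniform $U_R$ on a large ball handles both functionals at once with a one-line verification. For the ``irregular'' side ($\beta=0$, $\gamma=0$), the paper carves out a ball of small $\nu$-mass and inserts a uniform-on-segment component there; you instead attach a Dirac at an isolated exterior point for $\beta=0$, and for $\gamma=0$ you mix in a measure with density proportional to $\|x-p\|^\alpha$ on a ball disjoint from $\textnormal{supp}(\nu)$, so that non-standardness at $p$ follows from the direct computation $\nu'(\mathrm{B}(p,r))=O(r^{d+\alpha})$. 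The price you pay is the extra packing argument to ensure $\textnormal{supp}(\nu)\neq\mathbb{R}^d$ when $\gamma(\nu)=1$, whereas the paper's perturbation is local and sidesteps this; in exchange, your constructions avoid the convolution machinery and make all verifications explicit and dimension-independent.
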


\section{Concluding remarks}\label{sec:conclusion}

The standardness constant, while not extensively examined in the literature, is essential for set estimation. It imposes constraints on both the underlying distribution and the shape of its support, influencing the convergence rates of various well-known set estimators, including the Devroye-Wise estimator. The naive estimator, $\hat{\Upsilon}_n$, has received limited attention; existing studies have only established convergence in probability without specifying rates, often under much stronger assumptions than those we consider in Theorem \ref{th:esti}

Our simulations reveal that, despite its intuitive appeal, the naive estimator performs poorly in simple scenarios, such as when the distribution is uniform on a given set. Its performance further deteriorates as the dimensionality increases. To mitigate this issue, we propose a bias-correction method. Our simulations demonstrate that this method significantly outperforms the naive estimator, offering a more robust alternative in higher dimensions and diverse distributional contexts.

Determining whether this method achieves the optimal rate of convergence is beyond the scope of this paper. Additionally, we establish that it is impossible to determine, from a finite sample, whether the support is standard.

%%%%%%%%%%%%%%%%%%%%%%%%%%%%%%%%%%%%%%%%%%%%%%
%% Example with single Appendix:            %%
%%%%%%%%%%%%%%%%%%%%%%%%%%%%%%%%%%%%%%%%%%%%%%
\begin{appendix}
\section{Proofs of the theoretical results in Sections \ref{sec-back} and \ref{sec:est_stand}}
%In this section we provide the proofs of the theoretical results  presented in Sections \ref{sec-back} and \ref{sec:est_stand} .\\

\begin{proof}[Proof of Lemma \ref{lemaleo}]

Let us denote $\delta_{\text{est}}=  \lim_{r \rightarrow 0} \inf_{x \in S} \psi(x,r)$, where 
	$$\psi(x,r)= \nu(\mathrm{B}(x,r)\cap S)/(r^d\omega_d).$$ 
	Let 	$A$ be the set of all pairs $(\delta, \lambda)$ fulfilling \eqref{estandar}, and $\Upsilon(S,\nu)= \sup_{(\delta, \lambda) \in A} \delta$. From {\textnormal{(H1)}} it follows that, for $r$ small enough, $A$ is non-empty. We have to prove that $\delta_{\text{est}}=\Upsilon(S,\nu)$. Let us start by proving that $\Upsilon(S,\nu) \leq \delta_{\text{est}}$.  Let us fix $(\delta, \lambda)\in A$.     Then 
	$\psi(x,r) \geq \delta$  for all  $x\in S$ and $0<r<\lambda$. Then, we have
	$$\delta_{\text{est}}=  \lim_{r \rightarrow 0} \inf_{x \in S} \psi(x,r) \geq \delta.$$
	Therefore, it follows that $\Upsilon(S,\nu) \leq \delta_{\text{est}}$, because $\Upsilon(S,\nu)$ is the smallest upper bound.
	
	To prove  $\Upsilon(S,\nu) \geq \delta_{\text{est}}$, let us assume that $\Upsilon(S,\nu) < \delta_{\text{est}}$. Define $\gamma^*=(\delta_{\text{est}} -\Upsilon(S,\nu))/2$. There exists  $r^*>0$ such that, for all $0<r<r^*$,
	
	$$\inf_{x \in S} \psi(x,r) \in \mathrm{B}(\delta_{\text{est}}, \gamma^* ).$$
	Then, for all  $x \in S$ and  $0<r<r^*$,
	$$\Upsilon(S,\nu) < \delta^*  < \inf_{x \in S} \psi(x,r) \leq \psi(x,r),$$
	where  $\delta^* = \delta_{\text{est}}  - \gamma^*$. This implies that  $(\delta^*,  r^*) \in A$ and $\delta^*> \Upsilon(S,\nu)$ which is a contradiction.
	\end{proof}

\begin{proof}[Proof of Proposition \ref{lemgeo}]

	Let us first prove  that \eqref{prophip} implies that $S$ is standard with respect to $\mu$. Let $0<\eps<\mathcal{L}_{\partial S}/4$, and take $r_0$ such that $|\mathcal{L}_{\partial S}-\min_{x\in \partial S}\mu(\mathrm{B}(x,r)\cap S)/(r^d\omega_d)|<\eps$, for all $r< r_0$. For all $x\in S$ such that $d(x,\partial S)\geq r_0$, 
	$$\frac{\mu(\mathrm{B}(x,2r)\cap S)}{(2r)^d\omega_d}\geq \frac{1}{2^d}\frac{\mu(\mathrm{B}(x,r)\cap S)}{r^d\omega_d}=\frac{1}{2^d}.$$
	If $d(x,\partial S)\leq r_0$, let $y$ the closest point to $x$ in $\partial S$, then
	$$\frac{\mu(\mathrm{B}(x,2r)\cap S)}{(2r)^d\omega_d}\geq \frac{1}{2^d}\frac{\mu(\mathrm{B}(y,r)\cap S)}{r^d\omega_d}\geq\frac{1}{2^d}\min_{y\in \partial S} \frac{\mu(\mathrm{B}(y,r)\cap S)}{r^d\omega_d}\geq \frac{3\mathcal{L}_{\partial S}}{2^{d+2}}.$$
	Since $f$ is bounded from below by a positive constant, and $S$ is standard with respect to $\mu$, it follows easily that $S$ is standard with respect to $\nu$. 
	
	Let us prove that $S$ fulfils condition {\textnormal{(H1)}}. From Lemma \ref{lemaleo}, we have to prove that  $\lim_{r\to 0} \min_{x\in S} \psi(x,r)$ exists and is given by \eqref{estdens}. Let $x_0\in\text{argmin}_{x\in S} f(x)$.
	Since $S$ is compact and $f$ is continuous, it is uniformly continuous, that is, for all $\eps>0$ there exists $\delta>0$ such that $|f(x)-f(y)|\leq \eps$ for all $\|x-y\|<\delta$. 
	Let $\eps>0$ and $r<\delta/2$. Then,
	$$\psi(x_0,r):=\frac{\int_{\mathrm{B}(x_0,r)\cap S}f(t)dt}{r^d\omega_d}\leq (f(x_0)+\eps) \frac{\mu(\mathrm{B}(x_0,r)\cap S)}{r^d\omega_d}\leq f(x_0)+\eps.$$
	Then, it follows that  $\limsup_{r\to 0} \min_{x\in S} \psi(x,r)\leq \min_{t\in S}f(t)$. 
	Let $r<\delta/2$,
	$$\min_{x\in S} \psi(x,r)\leq \min_{x\in \partial S}\frac{\int_{\mathrm{B}(x,r)\cap S}f(t)dt}{r^d\omega_d}\leq \eps+\min_{x\in \partial S} f(x)\frac{\mu(\mathrm{B}(x,r)\cap S)}{r^d\omega_d}.$$
	Then,
	\begin{equation*}
	\limsup_{r\to 0}\min_{x\in S} \psi(x,r)\leq \min\Bigg\{\min_{x\in S}f(x), \lim_{r\to 0} \min_{x\in \partial S} f(x)\frac{\mu(\mathrm{B}(x,r)\cap S)}{r^d\omega_d}\Bigg\}.\end{equation*}
	
	It remains to be proved that $\liminf_{r\to 0}\min_{x\in S} \psi(x,r)$ is bounded from below by the right-hand side of \eqref{estdens}. 
	Since $\psi(x,r)$ is a continuous function of $x$ we can choose $r_n\to 0$ and $x_n$ such that 
	
\begin{equation*}
\psi(x_n,r_n)=\min_{x\in S} \psi(x,r_n)\quad\text{ and }\quad \psi(x_n,r_n)\to \liminf_{r\to 0}\min_{x\in S} \psi(x,r).
\end{equation*}
	
	We can assume, by considering a subsequence if necessary, that $x_n\to x^*\in S$. If $x^*\in {\textnormal{int}}(S)$, then there exists $\eta>0$ such that $\mathrm{B}(x^*,\eta)\subset S$. Let $\eta^*=\min\{\eta/2,\delta/2\}$, being $\delta$ from the uniform continuity of $f$. Then, for all $n$ large enough such that $r_n<\eta^*$, $\psi(x_n,r_n)>f(x^*)+\eps$, from where it follows that $\liminf_{r\to 0}\min_{x\in S} \psi(x,r)\geq f(x^*)\geq \min_{x\in S} f(x)$. If $x^*\in \partial S$, for all $n$ large enough such that $r_n<\eta^*$,

	\begin{align*}
		\psi(x_n,r_n)&\geq (f(x^*)-\eps)\frac{\mu(\mathrm{B}(x^*,r_n)\cap S)}{\mu(\mathrm{B}(x^*,r_n))}\geq f(x^*)\frac{\mu(\mathrm{B}(x^*,r_n)\cap S)}{\mu(\mathrm{B}(x^*,r_n))}-\eps\\
		& \geq  \min_{x\in \partial S} f(x)\frac{\mu(\mathrm{B}(x,r_n)\cap S)}{\mu(\mathrm{B}(x,r_n))}-\eps,
	\end{align*}
	from where it follows that 
	$$\liminf_{r\to 0}\min_{x\in S} \psi(x,r)\geq  \lim_{r\to 0} \min_{x\in \partial S}  f(x) \frac{\mu(\mathrm{B}(x,r)\cap S)}{r^d\omega_d},$$
	which concludes the proof.
\end{proof}

\begin{proof}[Proof of Theorem \ref{th:esti}]

		From Corollary 13.2 in \cite{luc}, $\mathcal{C}=\{\mathrm{B}(x,r_n):x\in S, n \in \mathbb{N}\}$ has  Vapnik-Chervonenkis  dimension at most $d+2$. Then, from Theorems 12.5 and 13.3 in \cite{luc}, 
		$$\mathbb{P}\left(\sup_{B=\mathrm{B}(x,r_n)\in \mathcal{C}}\left|\frac{\#\{\aleph_n\cap B\}}{n}-\nu(B)\right|>\eps \frac{r_n^d}{\beta_n}\omega_d\right)\leq 8n^{d+2}\exp\Big(\frac{-n\eps^2r_n^{2d}\omega_d^2}{32\beta_n^2}\Big).$$
		Furthermore,  from Borel Cantelli's lemma, 
		\begin{equation}\label{theq1}
			\beta_n\sup_{B=\mathrm{B}(x,r_n)\in \mathcal{C}}\frac{1}{r_n^d\omega_d}\left|\frac{\#\{\aleph_n\cap B\}}{n}-\nu(B)\right|\to 0\quad \text{ a.s}.
		\end{equation}
	Let $x_{r}$ denote a point in $S$ that minimizes $\nu(\mathrm{B}(x,r))/(r^d\omega_d)$, and $X_{i,r_n}$ the closest point in $\aleph_n$ to $x_{r_n}$. Then, with probability one, for $n$ large enough,  
	 $$\|X_{i,r_n}-x_{r_n}\| \leq d_H(\aleph_n,S)\leq K(\log(n)/n)^{1/d},$$

		for some $K>0$, (see Theorem 3 in  \cite{cuevas2004}). Let us denote $\eps_n=K(\log(n)/n)^{1/d}$. 
		Observe that $\eps_n/r_n\to 0$ since by hypothesis $(\eps_n/r_n)(\beta_n^2/r_n^{2d-1})\to 0$  and 
		$\beta_n^{2} / r_n^{2d-1}$ to $\infty$.

 Then, with probability one, for $n$ large enough,
		\begin{align}\label{cotath1}
			\beta_n\Bigg|\frac{\nu(\mathrm{B}(X_{i,r_n},r_n))}{r_n^d\omega_d}-\frac{\nu(\mathrm{B}(x_{r_n},r_n))}{r_n^d\omega_d}\Bigg|&\leq \beta_n\frac{\nu(\mathrm{B}(x_{r_n},r_n+\eps_n))}{r_n^d\omega_d} -\beta_n\frac{\nu(\mathrm{B}(x_{r_n},r_n))}{r_n^d\omega_d}\\\nonumber
			&\leq \beta_n\Bigg|\frac{\nu(\mathrm{B}(x_{r_n},r_n+\eps_n))}{r_n^d\omega_d}-\Upsilon(S,\nu)\Bigg|+\beta_n\Omega(r_n).\nonumber
		\end{align}
Since 
		$$\frac{\nu(\mathrm{B}(x_{r_n},r_n+\eps_n))}{r_n^d\omega_d}=\frac{\nu(\mathrm{B}(x_{r_n},r_n+\eps_n))}{(r_n+\eps_n)^d\omega_d}\frac{(r_n+\eps_n)^d}{r_n^d},$$
it follows that
\begin{equation*}
			\Bigg|\frac{\nu(\mathrm{B}(x_{r_n},r_n+\eps_n))}{r_n^d\omega_d}-\Upsilon(S,\nu)\Bigg|\leq \Omega(r_n+\eps_n)\Big(1+\frac{\eps_n}{r_n}\Big)^d+
			\Upsilon(S,\nu)\Bigg|1-\Big(1+\frac{\eps_n}{r_n}\Big)^d\Bigg|.
		\end{equation*}	 
If use that $(1+x)^d=1+dx+o(x^2)$, then, for $n$ large enough,
		\begin{equation*}
			\Bigg|\frac{\nu(\mathrm{B}(x_{r_n},r_n+\eps_n))}{r_n^d\omega_d}-\Upsilon(S,\nu)\Bigg|\leq 2\Omega(r_n+\eps_n)+\Upsilon(S,\nu)d\frac{\eps_n}{r_n}+o\Big(\frac{\eps_n^2}{r_n^2}\Big).
		\end{equation*}	
Since $\beta_n\eps_n/r_n\to 0$, and $\beta_n\Omega(r_n^{1+\gamma})\to 0$,
		
		\begin{equation}\label{cotath2}
			\beta_n\Bigg|\frac{\nu(\mathrm{B}(X_{i,r_n},r_n))}{r_n^d\omega_d}-\frac{\nu(\mathrm{B}(x_{r_n},r_n))}{r_n^d\omega_d}\Bigg|\to 0.
		\end{equation}
		From \eqref{cotath2}, \eqref{theq1}, and $\beta_n\Omega(r_n)\to 0$,
		$$\beta_n\Bigg|\frac{\#\{\aleph_n\cap \mathrm{B}(X_{i,r_n},r_n)\}/n}{r_n^d\omega_d}-\Upsilon(S,\nu)\Bigg|\to 0.$$
		Then
		$$\beta_n(\hat{\Upsilon}_n-\Upsilon(S,\nu))\leq \beta_n\Bigg|\frac{\#\{\aleph_n\cap \mathrm{B}(X_{i,r_n},r_n)\}/n}{r_n^d\omega_d}-\Upsilon(S,\nu)\Bigg|\to 0.$$
		This proves that, with probability one, for all $n$ such that $\beta_n(\hat{\Upsilon}_n-\Upsilon(S,\nu))\geq 0$, $\beta_n(\hat{\Upsilon}_n-\Upsilon(S,\nu))\to 0$ a.s. Let us define $\mathcal{N}=\{n\in \mathbb{N}: \beta_n(\hat{\Upsilon}_n-\Upsilon(S,\nu))<0\}$. If $\#\mathcal{N}<\infty$, then \eqref{limite} follows. Let us consider the case $\#\mathcal{N}=\infty$. Assume by contradiction that there exists $\gamma>0$ such that, with positive probability, 
		$$\beta_n(\Upsilon(S,\nu)-\hat{\Upsilon}_n)>\gamma\quad \text{ for all } n>n_0, n\in \mathcal{N}.$$
		For all $n\in \mathcal{N}$ and $n>n_0$ let  $X_{j,r_n}$ a sample point that minimize $\#\{\aleph_n\cap \mathrm{B}(X_i,r_n)\}/(r_n^2\omega_d)$. From \eqref{theq1}, for $n>n_1\geq n_0$
		$$\beta_n\Bigg(\Upsilon(S,\nu)-\frac{\nu(\mathrm{B}(X_{j,r_n},r_n)\cap S)}{r_n^d\omega_d}\Bigg)>\gamma/2>0.$$
Now observe that 
\begin{align*}
			\beta_n\Bigg(\Upsilon(S,\nu)-\min_{x\in S}\frac{\nu(\mathrm{B}(x,r_n)\cap S)}{r_n^d\omega_d}\Bigg)&\geq  \beta_n\Bigg(\Upsilon(S,\nu)-\frac{\nu(\mathrm{B}(X_{j,r_n},r_n)\cap S)}{r_n^d\omega_d}\Bigg)\\
			&>\gamma/2>0
		\end{align*}
		which is a contradiction because the left-hand side converges to 0.
	\end{proof} 

\begin{proof}[Proof of Proposition \ref{omega}] 
Assume that $\mu(S)=1$ for simplicity. We know that $\Upsilon(S,\nu)=\lim_{r\to 0} \nu(\mathrm{B}(x_r,r))/(\omega_dr^d)$. Then, 
 \begin{equation*} 
\Bigg |\Upsilon(S,\nu)- \lim_{r\to 0}   f(x_r) \frac{\mu(\mathrm{B}(x_r,r)\cap S)}{\omega_dr^d} \Bigg| \leq  \lim_{r\to 0}   \frac{\int_{\mathrm{B}(x_r,r)\cap S} |f(t)-f(x_r)|dt}{\omega_dr^d}=0.
 \end{equation*} 
Hence,
\begin{equation}\label{eq0}
	\Upsilon(S,\nu)=\lim_{r\to 0}   f(x_r) \frac{\mu(\mathrm{B}(x_r,r)\cap S)}{\omega_dr^d}.
\end{equation} 
From 
 $$\Bigg|\frac{\nu(\mathrm{B}(x_r,r)\cap S)}{\mu(\mathrm{B}(x_r,r))}-f(x_r)\frac{\mu(\mathrm{B}(x_r,r)\cap S)}{\omega_dr^d}\Bigg|\leq  \frac{\int_{\mathrm{B}(x_r,r)\cap S} |f(t)-f(x_r)|dt}{\omega_dr^d}\leq Lr^\alpha$$ 
and \eqref{eq0}, it follows 
 $$\Omega(r)\leq  \Bigg|f(x^*)\lim_{r\to 0}  \frac{\mu(\mathrm{B}(x_r,r)\cap S)}{\omega_dr^d} -f(x_r)\frac{\mu(\mathrm{B}(x_r,r)\cap S)}{\omega_dr^d}\Bigg|+Lr^\alpha.$$
If $x^*\in \textnormal{int}(S)$, and since $x^*$ is a local minimum, we have that  $\frac{\mu(\mathrm{B}(x_r,r)\cap S)}{\omega_dr^d}=1$ for $r$ small enough. Then
 $\Omega(r)=0$ for all $r$ small enough.  Assume now that $x^*\in \partial S$. By Corollary  \ref{coro2},
\begin{equation}\label{eq00}
\lim_{r\to 0}  \frac{\mu(\mathrm{B}(x_r,r)\cap S)}{\omega_dr^d} =1/2.
\end{equation}
Moreover, by the rolling conditions
\begin{equation}\label{orden2}
	\Big|\frac{\mu(\mathrm{B}(x^*,r)\cap S)}{\omega_dr^d}-\frac{1}{2}\Big|=O(r^{2d+1}).
\end{equation}
Then, 
  $$\Omega(r)\leq \Bigg|\frac{f(x_r)}{2}-\frac{f(x^*)}{2}\Bigg|+O(r^{2d+1})+Lr^\alpha\leq L|x_r-x^*|^\alpha+O(r^{2d+1}).$$
Observe that \eqref{eq00} implies that $|x_r-x^*|<r$.
 \end{proof}
 
 \begin{proof}[Proof of Theorem \ref{sesgo}]

		To prove that $\tilde{\Upsilon}_n\to \Upsilon(S,\nu)\   a.s.$ it is enough to prove that $\#\mathcal{A}_n/n\to 0$.

		Let $\beta_n$ as in Theorem \ref{th:esti}. For all $\eps>0$, with probability one, for all $n$ large enough, $\hat{\Upsilon}_n\leq \Upsilon(S,\nu)+\eps/\beta_n$.
		Then there exists $C$ a positive constant such that, with probability one, for all $n$ large enough,
		$$\#\mathcal{A}_n\leq \#\left\{i: \frac{\# (\aleph_n\cap \mathrm{B}(X_i,r_n))}{n\omega_dr_n^d}\leq \Upsilon(S,\nu)+\frac{\eps}{\beta_n}+Cr_n^{d/2}\right\}:=\#\mathcal{B}_n.$$
		From \eqref{theq1}, with probability one, for all $n$ large enough,
		$$\#\mathcal{B}_n\leq \#\left\{i: \frac{\nu(\mathrm{B}(X_i,r_n))}{\omega_dr_n^d}\leq \Upsilon(S,\nu)+2\frac{\eps}{\beta_n}+Cr_n^{d/2}\right\}.$$
		Let denote
		$$\mathbb{C}_n=\left\{x\in S: \frac{\nu(\mathrm{B}(x,r_n))}{\omega_dr_n^d}\leq \Upsilon(S,\nu)+2\frac{\eps}{\beta_n}+Cr_n^{d/2}\right\},$$
		and
		$$\mathbb{C}=\left\{x\in S:\Upsilon(S,\nu)=\lim_{r\to 0}  \frac{\nu(\mathrm{B}(x,r))}{\omega_dr^d}    \right\},$$
		then, with probability one, for $n$ large enough, $\#\mathcal{A}_n/n\leq (1/n)\sum_{i=1}^n \mathbb{I}_{\mathbb{C}_n}(X_i)$. From Hoeffding' inequality it follows that 
		$$\Bigg|\frac{1}{n}\sum_{i=1}^n \mathbb{I}_{\mathbb{C}_n}(X_i)-\nu(\mathbb{C}_n)\Bigg|\to 0, \quad a.s.$$
		Observe that  $\mathbb{I}_{\mathbb{C}_n}(y)= 0$  for $n$ large enough, for all $y\in \mathbb{C}^c$, from where it follows that $\nu(\mathbb{C}_n)\to 0$, which concludes the proof.
	\end{proof}

 \section{Proofs of the theoretical results  in Section \ref{noesti}}
%In this section we provide the proofs of the theoretical results  presented in Section \ref{noesti}.

		\begin{proof}[Proof of Lemma \ref{lemaux}]

Suppose that $\phi$ is estimable in $\mathcal{B}$, from $X_1,\dots,X_n$ i.i.d. with distribution $\nu$, with the estimator $T_n(X_1,\dots,X_n)$. Let us define
			$$S_n(X_1,\dots,X_n)=T_n\mathbb{I}_{|T_n|\leq K}+ \text{sign}(T_n)K\mathbb{I}_{|T_n|>K}.$$
			As in \cite{fm99}, $S_n$ is a consistent estimate of $\phi$ on $\mathcal{B}$. Define $\phi_n:\mathcal{B}\to [-K,K]$ by 
			\begin{align*}
				\phi_n(\nu)=\E_\nu (S_n)&=\int\dots \int S_n(x_1,\dots,x_n)d\nu\dots d\nu\\
				&= \int_{(\mathbb{R}^d)^n} S_n(x_1,\dots,x_n)d\nu^n(x_1,\dots,x_n),
			\end{align*}
			where $\nu^n$ is the product measure of $\nu$, $n$ times.
			Since $S_n$ is bounded, then $\phi_n(\nu)\to \phi(\nu)$, for all $\nu \in \mathcal{B}$.
			In addition, for all $\nu,\mu\in \mathcal{B}$,
			\begin{align*}
				|\phi_n(\nu)-\phi_n(\mu)|&=  |\E_\nu(S_n)-\E_\mu(S_n)|\leq K\|\nu^n-\mu^n\|_{TV}\leq Kn\|\nu-\mu\|_{TV}\\
				&\leq ncK\|\nu-\mu\|.
			\end{align*}
			Now as in \cite{fm99}, using Baire's theorem,  it follows that $\phi$ must be continuous on a dense subset of $\mathcal{B}$.
		\end{proof}

\begin{proof}[Proof of Theorem \ref{thdensity}]
From Lemma \ref{lemaux} it is enough to prove that $\alpha$ is discontinuous at every $\nu\in \M$. Let us consider $\nu\in \M$ such that $\nu \ll \mu$. For all $\eps>0$  there exists $x$ and $r>0$ such that $\nu(\mathrm{B}(x,r))<\eps/2$, where it can be $\nu(\mathrm{B}(x,r))=0$. Let $z,y\in \mathrm{B}(x,r)$  such that $z\neq y$. Let $U$ be a random variable with uniform distribution on the segment $[z,y]$ joining $z$ and $y$. Let us consider $B$ a Bernoulli random variable with $\mathbb{P}(B=1)=\eps/2$. Let us consider a random vector  $X$ such that if $B=1$ then $X$ is distributed as the restriction of $\nu$ to ${\textnormal{supp}}(\nu)\setminus \mathrm{B}(x,r)$, on the contrary if $B=0$, $X=U$. Let $\lambda$ be the distribution of $X$, then $\lambda$ is not absolutely continuous with respect to the Lebesgue measure $\mu$, and $\|\lambda-\nu\|_{TV}<\eps$.
		
		Let us consider now $\nu$ such that $\alpha(\nu)=0$. Let  $\lambda_n$ be the probability measure on $\mathbb{R}^d$  associated to a Gaussian kernel $K_n$, with mean $0$ and variance $(1/n)I_d$ being $I_d$ the identity matrix in $\mathbb{R}^d$. Let $\nu\ast \lambda_n$ be the convolution of $\nu$ and $\lambda_n$, then it has density $K_n\ast \nu$ defined by 
		\begin{equation}\label{convoden}
			K_n\ast \nu(y)= \int K_n(y-x)d\nu(x).
		\end{equation}
		Since  $\lambda_n\to \delta_0$ the point mass in 0, and $\nu\ast \delta_0=\nu$, from \cite{part85}, $\lambda_n\ast \nu\to \nu$ in total variation, which concludes the proof of the theorem.
	\end{proof}
		
\begin{proof}[Proof of Theorem \ref{thstnoesti}]
 Let us start by proving that \eqref{clausint} cannot be consistently estimated. We will apply Lemma \ref{lemaux}. Let $\nu \in \M$ be such that $\beta(\nu)=0$. Let us denote $S={\textnormal{supp}}(\nu)$. Let us consider the sequence of sets $S_n=\mathrm{B}(S,1/n)$. Then $\overline{{\textnormal{int}}(S_n)}=S_n$. Since $S$ is closed, $\cap_n S_n=S$ . Let us consider $\lambda$ the standard Gaussian measure on $\mathbb{R}^d$. For all $\eps>0$ there exists $n$ large enough such that $\lambda(S_n\setminus S)<\eps$. The measure $\xi$ defined as 
		$$\xi(A)=\frac{\nu(A\cap S)+\lambda(A\cap (S_n\setminus S))}{1+\lambda(S_n\setminus S)},$$
		where $A$ is a Borel set, is a probability measure supported on $S_n$, $\beta(\xi)=1$ and $\|\xi-\nu\|_{TV}<2\eps$. Let $S$ such that $\beta(S)=1$. We can proceed as in the proof of Theorem \ref{thdensity}; let $\eps>0$, we first consider $\mathrm{B}(x,r)$ such that $\nu(\mathrm{B}(x,r))<\eps$ (it can be $\nu(\mathrm{B}(x,r))=0$), then we consider a non-empty segment on $\mathrm{B}(x,r)$ define the uniform on that segment. The measure $\lambda$ defined as in Theorem \ref{thdensity} fulfils that $\beta(\lambda)=0$ and $\|\lambda-\nu\|_{TV}<\eps$.
		
		To prove that \eqref{st} cannot be consistently estimate let us consider first $\nu\in \M$ such that $\gamma(\nu)=0$.  Let $\eps>0$ and $R$ large enough such that $\nu(\mathrm{B}(0,R)^c)<\eps$. 
		Let $S_\eps=S\cap \mathrm{B}(0,R)$ and $S_n=\mathrm{B}(S_\eps,1/n)$. Observe that $S_n$ is standard with respect to the $\mu$. Reasoning as before, $S_\eps$ is closed and then $\cap_n S_n=S$. Let us consider $\nu\ast \lambda_n$ the convolution of $\nu$ with $\lambda_n$, the Gaussian measure on $\mathbb{R}^d$ with variance $1/n$, as in Lemma \ref{lemaux}. There exists $n$ large enough such that $\nu*\lambda_n(S_n\setminus S_\eps)<\eps$. The  density of  $\nu*\lambda_n$  is given by \eqref{convoden}. Let $M$ be a compact set such that $\nu(M)>0$, then for all $z\in S_n$ and all $r>0$,
		\begin{align*}%\label{cota}
			\nu*\lambda_n(\mathrm{B}(z,r)\cap S_n)&=\int_{\mathrm{B}(z,r)\cap S_n} \int_{\mathbb{R}^d}  K_n(y-x)d\nu(x) dy\geq 			\int_{\mathrm{B}(z,r)\cap S_n }  K_n(y-x)d\nu(x) dy.
		\end{align*}
		If $z\in S_n$ and $y\in \mathrm{B}(z,r)\cap S_n$ then $y\in \mathrm{B}(S_n,r)$ which is a compact set, then there exists $c>0$ such that $K_n(y-x)\geq c$ for all $x\in M$ and $y\in \mathrm{B}(z,r)\cap S_n$, then
		\begin{equation*}%\label{cota}
			\nu*\lambda_n(\mathrm{B}(z,r)\cap S_n)\geq c\nu(M)\mu(\mathrm{B}(z,r)\cap S_n).
		\end{equation*}
		Then  $\gamma((\nu\ast\lambda_{n})_{|S_n})=1$ where $(\nu\ast \lambda_{n})_{|S_n}$ is the restriction of $\nu\ast\lambda_n$ to $S_n$.  Finally, from $(\nu\ast \lambda_{n})_{|S_n}\to \nu_{|S_\eps}$ in total variation, it follows that, for $n$ large enough, $\|\nu-\nu\ast \lambda_{n_{|S_n}}\|_{TV}<2\eps$.  It remains to be proved that if  $\nu$ is such that $\gamma(\nu)=1$, then for all $\eps>0$ there exists $\nu_\eps$ such that $\|\nu_\eps-\nu\|_{TV}<\eps$ and $\gamma(\nu_\eps)=0$. We proceed as in the proof that \eqref{clausint} cannot be estimated. We first consider $\mathrm{B}(x,r)$ such that $\nu(\mathrm{B}(x,r))<\eps$ (it can be $\nu(\mathrm{B}(x,r))=0$), then we consider a non-empty segment on the ball and define the uniform on that segment. The measure $\lambda$ defined as in Theorem \ref{thdensity} fulfils that $\gamma(\lambda)=0$ and $\|\lambda-\nu\|_{TV}<\eps$.
	\end{proof}
\end{appendix}

%%%%%%%%%%%%%%%%%%%%%%%%%%%%%%%%%%%%%%%%%%%%%%
%% Acknowledgements                         %%
%% should be provided in the                %%
%% Acknowledgements section.                %%
%%%%%%%%%%%%%%%%%%%%%%%%%%%%%%%%%%%%%%%%%%%%%%
%\begin{acks}[Acknowledgments]
%The authors would like to thank the anonymous referees, an Associate
%Editor and the Editor for their constructive comments that improved the
%quality of this paper.
%\end{acks}

%%%%%%%%%%%%%%%%%%%%%%%%%%%%%%%%%%%%%%%%%%%%%%
%% Funding information, if any,             %%
%% should be provided in the                %%
%% funding section.                         %%
%%%%%%%%%%%%%%%%%%%%%%%%%%%%%%%%%%%%%%%%%%%%%%
\section*{Acknowledgments}
This work was supported by the ANII (Uruguay) under Grant FCE-3-2022-1-172289;  CSIC (Uruguay) under Grant 22520220100031UD (first and second authors); MCIN/AEI/10.13039/501100011033 under Grant PID2020-116587GB-I00 and Conseller\'{i}a de Cultura, Educaci\'{o}n e Universidade under Grant ED431C 2021/24 (third author).

%The first and second authors were supported by FCE-3-2022-1-172289, ANII (Uruguay) and 22520220100031UD, CSIC (Uruguay). The third author was supported by grant PID2020-116587GB-I00 funded by MCIN/AEI/10.13039/501100011033 and ED431C 2021/24 funded by Conseller\'{i}a de Cultura, Educaci\'{o}n e Universidade.

\section*{Disclosure statement} 
The authors report there are no competing interests to declare.

\bibliographystyle{abbrv}
\bibliography{biblio.bib}      % Bibliography file (usually '*.bib')

\end{document}